\documentclass[submission,copyright,creativecommons]{eptcs}
\providecommand{\event}{ACT 2025} 

\usepackage{iftex}

\ifpdf
  \usepackage{underscore}         
  \usepackage[T1]{fontenc}        
\else
  \usepackage{breakurl}           
\fi

\usepackage[final]{microtype}
\usepackage{amsthm,mathtools,quiver}
\usepackage{xcolor}
\newtheorem{theorem}{Theorem}[section]
\newtheorem{lemma}[theorem]{Lemma}

\newtheorem{proposition}[theorem]{Proposition}

\newtheorem{corollary}[theorem]{Corollary}

\theoremstyle{definition}
\newtheorem{definition}[theorem]{Definition}

\usepackage{newtxmath,newtxtext}
\usepackage[mode=buildmissing]{standalone}

\newcommand{\Set}{\mathsf{Set}}
\title{Convex Duality made Difficult}
\author{Eigil Fjeldgren Rischel\institute{Tallinn University of Technology}\footnote{This work was supported by the ARIA programme on Safeguarded AI}}
\begin{document}

\def\authorrunning{Eigil Fjeldgren Rischel}
\def\titlerunning{Convex Duality made Difficult}

\maketitle
\section{Introduction}\par{}The study of convex functions - in particular, of their optimization (really \emph{minimization}) is one of the most important fields of applied mathematics. Convexity seems to be one of those incredibly well-chosen hypotheses which is just specific enough to admit a wealth of theorems, just general enough to produce a nontrivial theory (and a large amount of important examples).

Convex optimization, possibly because it has an "analytical" rather than "algebraic" feel, has not been very thoroughly studied by applied category theorists. The one notable exception is \cite{hanks-etal-convex-2024}, which studies the decomposition of optimization problems by categorical means. This paper takes a different approach, attempting to define a category with optimization problems as the objects, and to prove theorems about optimization by categorical means. As an illustration, we show how to use our methods to rederive some existing results: A minimax-type theorem, Theorem~\ref{efr-001E}, and the fact that for convex functions, $(f^*)^* = f$ (where $f$ is the Legendre dual), Proposition~\ref{efr-001V}. 

\section{Convex optimization}

Let us first recall some of the basic objects of convex optimization, taking this opportunity to fix our terminology. See \cite{boyd-vandenberghe-convexopt-2004} for a standard reference on this.

\begin{definition}[{Standard form convex optimization problem}]\label{efr-001G}
  
  A \emph{convex optimization problem in standard form} consists of
    \begin{enumerate}\item{}A convex function \(f_0: \mathbb {R}^k \to  \mathbb {R}\)
        \item{}A list of convex functions \(f_1, \dots  f_n: \mathbb {R}^k \to  \mathbb {R}\)
        \item{}A list of \emph{affine} functions \(g_1,\dots  g_m: \mathbb {R}^k \to  \mathbb {R}\)\end{enumerate}
    The problem then is to find \(x \in  \mathbb {R}^k\) which minimizes \(f_0(x)\) subject to the constraints \(f_i(x) \leq  0, g_i(x)=0\).

    A \emph{concave} optimization problem is one where instead the function $f$ to be optimized is concave, and the goal is maximization (note that we still optimize on a convex set).

In the following, we will take $\mathbb{R}_+$, somewhat unconventionally, to denote the \emph{nonnegative} reals. $\mathbb{R}^n_+$ simply denotes the set of vectors which is nonnegative in each coordinate.

\end{definition}

\begin{definition}[{Lagrangian of an optimization problem}]\label{efr-002K}\par{}Let \(f_0: \mathbb {R}^k \to  \mathbb {R}, f_1, \dots  f_n, g_1 , \dots  g_m\) be a standard-form convex optimization problem, as in Definition~\ref{efr-001G}. Then the \emph{Lagrangian} of this problem is the function \(L: \mathbb {R}^k \times  \mathbb {R}^n_+ \times  \mathbb {R}^m \to  \mathbb {R}\) defined by
  
  \[L(x;\lambda ,\nu ) = f_0(x) + \sum _i \lambda _i f_i(x) + \sum _i \nu _i g_i(x).\]
  \end{definition}\par{}Observe that \(\sup _{\lambda ,\nu } L(x,\lambda ,\nu )\) is \(f_0(x)\) if \(x\) satisfies the constraints of the problem, and \(\infty \) otherwise. Hence we can think of this minimization problem as playing a zero-sum game: we choose \(x\), our adversary chooses \(\lambda ,\nu \), and our loss function is \(L\).\par{}It is natural to ask about the existence of (Nash) equilibria in this game - observe that the existence of an equilibrium \((x^*,\lambda ^*,\nu ^*)\) means that \(\inf _x \sup _{\lambda ,\nu } L(x,\lambda ,\nu ) = \sup _{\lambda ,\nu } \inf _x L(x,\lambda ,\nu ) = L(x^*,\lambda ^*,\nu ^*)\).
  
  This is of great utility in solving the original problem.\par{}
  
  The \emph{dual problem} is the problem of \emph{maximizing} the function \(\inf _x L(x,\lambda ,\nu )\).
  This is always a concave problem (in the sense that the infimum is a concave function of $\lambda, \nu$).

  In the world of convex optimization, two problems whose constraints carve out the same subset of \(\mathbb {R}^k\) (and where the function to optimize is the same) would be called \emph{equivalent}. But they can clearly not be regarded as \emph{isomorphic}, because the choice of constraint functions makes an important difference to the theory of optimization (for example, it can lead to different dual problems). Here we take the viewpoint that the \emph{Lagrangian} is really the fundamental object in convex optimization - by passing to a suitable category of Lagrangians, we can make the dual problem into an actual self-duality on this category.
  
  \section{Convex spaces}\begin{definition}[{Convex Space}]\label{efr-0020}\par{}The category of \emph{convex spaces} is the category of algebras for the monad \(\Delta : \mathsf {Set} \to  \mathsf {Set}\) of discrete finite-support distributions. The morphisms are called \emph{\(\Delta \)-homomorphisms} or \emph{homomorphisms of convex spaces}.\par{}So as not to multiply notation unnecessarily, we simply denote the category of convex spaces by \(\mathsf {Set}^\Delta \), using the usual notation for the Eilenberg-Moore category.\end{definition}
  
  Convex spaces have been studied many times - see eg \cite{fritz2009convex} for a systematic description of $\Set^\Delta$.  

  \begin{definition}[{}]\label{efr-002N}\par{}A function between vector spaces is called \emph{affine} if it preserves those linear combinations \(\sum _i \lambda _i x_i\) where \(\sum _i \lambda _i = 1\)\end{definition}
  
  \begin{definition}[{}]\label{efr-002M}\par{}Let \(X\) be a convex space. A \emph{convex function on \(X\)} is a function \(f: X \to  \mathbb {R}\) so that
  \[f(\theta  x + (1-\theta )x') \leq  \theta  f(x) + (1-\theta )f(x')\]\par{}A \emph{concave function} is a function so that \(-f\) is convex (in other words, \(f\) satisfies the opposite inequality).\end{definition}\par{}The term "convex function" in this sense clashes with the usual practice of naming structure-preserving functions after the structure they preserve (since convex functions do not preserve the convex structure). Unfortunately this usage is far too established to alter. (Convex functions are called convex because they are exactly those functions where the area above their graph is a convex subset of \(X \times  \mathbb {R}\). Although there appears to be no particular reason why the terms convex and concave should not be interchanged, other than convention).
  \par{}The inequality
  \[f(\theta  x + (1-\theta )x') \leq  \theta  f(x) + (1-\theta )f(x'),\]
  which holds whenever \(f\) is convex, is called \emph{Jensen's inequality}.
  Sometimes this name is used for a stronger version of this inequality,
  like the claim that \(f(\mathbb {E} X) \leq  \mathbb {E} f(X)\) if \(X\) is a random variable valued in the domain of \(f\). These generally follow just from convexity of \(f\).\label{efr-002Q}\par{}There is an natural way to extend the convex structure of \(\mathbb {R}\) to both \([ -\infty , \infty  )\) and \(( -\infty , \infty  ]\), by the convention that any nontrivial convex combination involving an infinity is equal to that infinity. This also gives the adjectives convex and concave a meaning when applied to functions \(X \to  ( -\infty , \infty  ]\). For example, a function \(f: X \to  ( -\infty , \infty  ]\) is convex if and only if the subset where it's finite is a convex subset of \(X\), and it's a convex function in the ordinary sense on this set.\par{}This doesn't work for the extended real line \(\overline {\mathbb {R}} = [ -\infty , \infty  ]\), since there is no sensible interpretation of \(\theta  \cdot  -\infty  + (1-\theta )\infty \). We will inescapably meet some functions which take value in the full extended reals, but where we still wish to speak of their convexity (or concavity).\par{}Hence we adopt the convention that a function \(f: X \to  \overline {\mathbb {R}}\) is convex if it obeys Jensen's inequality whenever it makes sense, i.e whenever we do not have \(f(x) = -\infty , f(x') = \infty \) or vice versa.\begin{proposition}[{}]\label{efr-002O}\par{}A function between vector spaces is \hyperref[efr-002N]{affine} if and only if it is a \hyperref[efr-0020]{\(\Delta \)-homomorphism}.\begin{proof}\par{}It's clear that an affine function is a \(\Delta \)-homomorphism.
    Suppose \(f: X \to  Y\) is a \(\Delta \)-homomorphism. Note it suffices to prove \(f\) preserves binary affine combinations \(\theta  x + (1-\theta )x'\) (for \(\theta \) not necessarily in \([0,1]\)).
    If \(\theta  \in  [0,1]\), we are done by assumption. Otherwise suppose \(\theta  > 1\) (if not, replace it by \(1-\theta \) by symmetry). Then
    \[x = (1/\theta )(\theta  x + (1-\theta )x') + (1 - 1/\theta )x'\]
    This is a convex combination, so
    \[f(x) = (1/\theta )f(\theta  x + (1-\theta )x') + (1-1/\theta )f(x')\]
    Rearranging, we find
    \[\theta  f(x) + (1-\theta )f(x') = f(\theta  x + (1-\theta ')x)\]
    as desired.\end{proof}\end{proposition}\par{}Justified by Proposition~\ref{efr-002O}, we will appropriate the term \emph{affine} to refer to \(\Delta \)-homomorphisms, even between convex spaces which are not vector spaces. There is generally no chance of confusion, but it's worth emphasizing that the use of this term does not entail that the domain is closed under arbitrary affine combinations, for example.
    
    Convex spaces admit both a Cartesian product (given by the product of the underlying sets equipped with pointwise operations) and a tensor product, which (co)represents "bihomomorphisms". This is analogous to the situation for vector spaces. Unlike vector spaces, however, since all constant maps are homomorphisms, the projections \(X \times  Y \to  X,Y\) are bihomomorphisms, which induces a map \(X \otimes  Y \to  X \times  Y\). Thus homomorphisms \(X \times  Y \to  Z\) are a subset of bihomomorphisms.\begin{definition}[{Simplex}]\label{efr-002S}\par{}The free convex space on a finite set \(\{0, \dots  n\}\) of \(n+1\) elements is called the \emph{\(n\)-simplex} and denoted \(\Delta ^n\) (the reason for the apparent mismatch of numbering is that the \(n\)-simplex is \(n\)-dimensional). Note that an element of \(\Delta ^n\) is a tuple \((s_i)_{i=0,\dots , n}\) so that \(\sum _i s_i = 1\) and \(s_i \geq  0\).
  In particular, \(\Delta ^1 \cong  [0,1]\).\end{definition}
  \begin{definition}[{Topological convex space}]\label{efr-002U}\par{}A \emph{topological convex space} is a convex space \(X\) equipped with a topology so that any affine map \(\Delta ^n \to  X\) is continuous (when \(\Delta ^n \subseteq  \mathbb {R}^{n+1}\) is given the subspace topology).\end{definition}

  \section{The Category of Minmax problems}
  \begin{definition}[{Minmax problem}]\label{efr-001C}\par{}A \emph{minmax problem} is a triple \((X,Y,L),\) where \(X,Y\) are convex spaces, and \(L: X \times  Y \to  \mathbb {R}\) is a function which is
    \begin{enumerate}\item{}Pointwise \emph{convex} in \(X\) - for each \(y\), given \(x_1,x_2 \in  X, \theta  \in  [0,1]\), \[L(\theta  x_1 + (1-\theta )x_2,y) \leq  \theta  L(x_1,y) + (1-\theta )L(x_2,y)\]
        \item{}Pointwise \emph{concave} in \(Y\) - for each \(x\), given \(y_1,y_2 \in  Y, \theta  \in  [0,1]\), \[L(x,\theta  y_1 + (1-\theta )y_2) \geq  \theta  L(x,y_1) + (1-\theta )L(x,y_2)\]\end{enumerate}\par{}A \emph{morphism of minmax problems} \((X,Y,L) \to  (X',Y',L')\) is a pair of functions \(\phi ^+: X \to  X'\) and \(\phi ^-: Y' \to  Y\) so that \(L(x, \phi ^-(y')) \geq  L'(\phi (x),y')\)
    \end{definition}
        
    We will see that various constructions on this category, which are natural and well-behaved from the point of view of category theory, capture relevant constructions from the theory of convex optimization.\begin{enumerate}\item{}\(\mathsf {Minmax}\) is bifibred over \(\mathsf {Set}^\Delta  \times  \mathsf {Set}^{\Delta ,\mathrm {op}}\), and the Cartesian and coCartesian lifts capture the operations of minimizing over the primal variables or maximizing over the dual variables
    \item{}The property of \emph{strong duality} amounts to the claim that a particular diagram has the local Beck-Chevalley property
    \item{}Relatedly, the existence of a Nash equilibrium for the game corresponding to \(L\) amounts to the existence of a certain morphism. The fact that this implies strong duality can be derived by purely categorical means.\end{enumerate}\begin{proposition}[{}]\label{efr-002H}\par{}Let \((X,A,L)\) be a minmax problem.
  Suppose \(A\) is a convex subspace of a vector space \(V\), and \(L(x,-): A \to  \mathbb {R}\) is affine for each \(x\). Suppose $A$ contains an open subset of $V$.
  Then there exists unique convex functions \(f: X \to  \mathbb {R}, g: X \to  V^*\), so that \(L(x,a) = f(x)+\langle  g(x),a \rangle \).\end{proposition}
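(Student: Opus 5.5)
The plan is to reduce everything to a fibrewise statement about single affine functions, and then transport the convexity hypothesis on \(L\) across to \(f\) and \(g\).

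\emph{Step 1 (fibrewise decomposition).} Fix \(x\in X\) and let \(\ell_x = L(x,-): A\to\mathbb{R}\). By Proposition~\ref{efr-002O} (and the remark after it), \(\ell_x\) preserves convex combinations. I will first show that it preserves all affine combinations of points of \(A\) that happen to lie in \(A\). The argument runs exactly as in the proof of Proposition~\ref{efr-002O}: rewrite a point as a convex combination of the affine combination and one of the other points. Next, pick an open set \(U\subseteq A\) and a point \(a_0\in U\). Every \(v\in V\) has \(a_0+tv\in U\) for small \(t>0\). So I will define
\[\tilde\ell_x(a_0+v) = \ell_x(a_0) + \tfrac1t\bigl(\ell_x(a_0+tv)-\ell_x(a_0)\bigr).\]
This is independent of \(t\), because \(\ell_x\) is affine on the segment \(\{a_0+sv\}\cap U\). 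The map \(v\mapsto \tilde\ell_x(a_0+v)-\ell_x(a_0)\) is then additive and homogeneous, hence a linear functional \(\phi_x\in V^*\).

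I will then set \(g(x)=\phi_x\) and \(f(x)=\ell_x(a_0)-\langle\phi_x,a_0\rangle\). The identity \(L(x,a)=f(x)+\langle g(x),a\rangle\) holds on \(U\) by construction. It extends to all of \(A\) because any \(a\in A\) lies on a segment through \(a_0\) that meets \(U\) in an open interval, and two affine functions on a segment that agree on an open subinterval agree everywhere.

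\emph{Step 2 (uniqueness).} Suppose \(f+\langle g,-\rangle = f'+\langle g',-\rangle\) on \(A\). Then the affine function \(a\mapsto (f-f')(x)+\langle (g-g')(x),a\rangle\) vanishes on the open set \(U\). Differencing at \(a_0\) and \(a_0+tv\) gives \(\langle (g-g')(x),v\rangle=0\) for every \(v\in V\). Hence \(g=g'\), and then \(f=f'\).

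\emph{Step 3 (convexity), the expected obstacle.} For each fixed \(a\in A\), the map \(x\mapsto f(x)+\langle g(x),a\rangle = L(x,a)\) is convex by hypothesis, and this is the convexity we genuinely inherit. To deduce convexity of \(f\) itself, I would try to express \(f\) as a nonnegative combination of the functions \(L(-,a)\). This works when \(0\in A\), since then \(f=L(-,0)\). Similarly, \(x\mapsto\langle g(x),v\rangle\) is convex whenever \(a_0+v\) and \(a_0\) can be arranged with nonnegative weights. Without some such hypothesis this step breaks down. Take \(A=[1,\infty)\subseteq\mathbb{R}\) and \(L(x,a)=(a-1)x^2\). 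Then \(f=-x^2\) is not convex, although every \(L(-,a)\) is. So I would prove convexity in the sense that \(f+\langle g,a\rangle\) is convex for all \(a\in A\). I would also add the normalisation \(0\in A\) (obtainable by translating \(A\), at the cost of translating \(f\)) in order to get literal convexity of \(f\).

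Interpreting "convex \(g:X\to V^*\)" is likewise the delicate point. The natural reading is that \(x\mapsto\langle g(x),v\rangle\) is convex for \(v\) in the recession cone of \(A\). This follows from \(L(x,a+tv)/t\to\langle g(x),v\rangle\) as \(t\to\infty\), because pointwise limits of convex functions are convex.
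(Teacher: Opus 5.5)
Your Steps 1 and 2 follow the route the paper has in mind. The paper omits the proof, remarking only that an affine function on \(A\) extends uniquely, thanks to the open subset, to a function on \(V\) of the form linear plus constant. Your construction of \(\phi_x\) and your uniqueness argument carry this out. (Your \(\phi_x\) lands in the algebraic dual, which coincides with \(V^*\) in the finite-dimensional setting the paper works in.)

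The paper's sketch says nothing about convexity, and you are right that this part of the statement is false as written. Your example \(X=\mathbb{R}\), \(A=[1,\infty)\), \(L(x,a)=(a-1)x^2\) satisfies every hypothesis. Because the decomposition is unique, the only candidate is \(f(x)=-x^2\), so no convex \(f\) exists and the literal statement cannot be proved.

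Your repair is the right one, and it is what the comparison with standard-form problems after the statement needs. Re-basing at any \(a_1\in A\) gives \(L(x,a)=L(x,a_1)+\langle g(x),a-a_1\rangle\), whose constant term is convex. Take the Lagrangian domain \(A=\mathbb{R}^n_+\times\mathbb{R}^m\), which contains \(0\), so \(f=L(-,0)\). There your recession-cone reading makes the components of \(g\) along \(\mathbb{R}^n_+\) convex. Since both \(\pm e_j\) are recession directions, the components along \(\mathbb{R}^m\) are both convex and concave, i.e.\ affine. This recovers exactly the \(f_i\) and \(g_j\) of Definition~\ref{efr-001G}.

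One small remark: your preliminary claim that \(\ell_x\) preserves every affine combination landing in \(A\) does not follow verbatim from the proof of Proposition~\ref{efr-002O}. That proof reduces to binary combinations, which needs intermediate points to stay in the domain. You never use the claim, though, since your construction only needs affineness along segments and at midpoints.
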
\par{}Observe that minmax problems affine in \(A\) are thus very similar to standard-form convex optimization problems, the main difference being that the set of allowed points in \(A\) may be constrained in some other way than by requiring certain coordinates to be nonnegative.\par{}
  We omit the proof for brevity, but note that the key point is that an affine function on $A$ always admits a unique extension to $V$, which is always given by a linear function plus a constant.  
  \begin{definition}[{Primal and dual optimization problems}]\label{efr-001D}\par{}Let \(L: X \times  Y \to  \mathbb {R}\) be a \hyperref[efr-001C]{minimax problem}.
    The \emph{primal optimization problem} associated to \(L\) is the function
    \[L^+(-) = \sup _y L(-,y): X \to  \mathbb {R}\]
    (the problem being to \emph{minimize} this function).\par{}The dual optimization problem is the function \(L^-(-) = \inf _x L(x,-): Y \to  \mathbb {R}\)\end{definition}\begin{definition}[{Dual minmax problem}]\label{efr-001J}\par{}Let \(L = (X,Y,L)\) be a \hyperref[efr-001C]{minmax problem}. Then let \(L^*\) denote the \emph{dual} problem given by \((Y,X,L^*(y,x) = -L(x,y))\).\par{}If \(\phi  = (\phi ^+,\phi ^-) : L \to  L'\) is a morphism of minmax problems, then \(\phi ^* = (\phi ^-,\phi ^+): L'^* \to  L^*\) is again a morphism in the other direction. This assignment makes \((-)^*\) into a self-inverse functor on the category of minmax problems\end{definition}\par{}We will often utilize this duality to abbreviate proofs, proving something, for example, for the forwards direction and arguing "by duality" that it holds for the backwards direction as well.\begin{definition}[{Backwards and forwards morphisms}]\label{efr-0027}\par{} Let a morphism \(\phi  = (\phi ^+,\phi ^-)\) in \(\mathsf {Set}^\Delta  \times  \mathsf {Set}^{\Delta ,\mathrm {op}}\) be called \emph{forwards} if \(\phi ^-\) is an isomorphism, and \emph{backwards} if \(\phi ^+\) is an isomorphism.
    \end{definition}
      Let \(F\) denote the set of forwards morphisms, \(B\) the set of backwards. Then clearly \((F,B)\) form an orthogonal factorization system - in fact, both \((F,B)\) and \((B,F)\) do.\par{}We will say a morphism in \(\mathsf {Minmax}\) is forwards, respectively backwards, if it is so considered as a morphism in \(\mathsf {Set}^\Delta  \times  \mathsf {Set}^{\Delta ,\mathrm {op}}\), and reuse the notation \(F,B\) for these subclasses of morphism.
  
  \begin{lemma}[{}]\label{efr-002A}\par{}Let \(X,Y\) be convex spaces and let \(A \subset  X \times  Y\) be a convex subspace. Let \(f: A \to  \mathbb {R}\) be a convex function.
  Then \(x \mapsto  \inf _{y: (x,y) \in  A} f(x,y)\) is again convex.\begin{proof}\par{}Let \(\theta  \in  [0,1],x,x' \in  X\) be given, and consider:\[\inf_{y: (\theta  x + (1-\theta )x',y) \in  A}f(\theta  x + (1-\theta  x'),y).\]\par{}Since if \((x,y), (x',y') \in  A\) then \((\theta  x + (1-\theta )x',\theta  y + (1-\theta )y') \in  A\), we have that this is less than:
  \[\leq  \inf _{y,y': (x,y),(x',y')\in  A} f(\theta  x + (1-\theta )x', \theta  y + (1-\theta ) y'),\]
  because in the latter we are taking the infimum over a smaller set of \(f\)'s\par{}Applying convexity, we get
    \[\leq  \inf _{y,y': (x,y),(x,y') \in  A} \theta  f(x,y) + (1-\theta )f(x',y')\]
    \[\leq  \theta  \inf _{y: (x,y) \in  A}f(x,y) + (1-\theta )\inf _{y': (x',y')\in  A} f(x',y')\]
    This is precisely the desired inequality.\end{proof}\end{lemma}
    
    \begin{proposition}[{}]\label{efr-0023}\par{}The forgetful functor \(\mathsf {Minmax} \to  \mathsf {Set}^\Delta  \times  \mathsf {Set}^{\Delta ,\mathrm {op}}\) is a bifibration. Moreover, we have the following description of the (co)Cartesian morphisms over backwards and forwards maps.
  \begin{enumerate}\item{}A forwards morphism \((\phi ,1_A): (X,A,L) \to  (Y,A,L')\) is Cartesian if and only if \(L(x,a) = L'(\phi (x),a)\) for all \(x,a\)
  \item{}A forwards morphism \((\phi ,1_A): (X,A,L) \to  (Y,A,L')\) is coCartesian if and only if \(L'(y,a) = \inf _{\phi (x) = y} L(x,a)\)
  \item{}A backwards morphism \((1_X,\phi ): (X,A,L) \to  (X,B,L')\) is Cartesian if and only if \(L(x,a) = \inf _{\phi (b)=a} L'(x,b)\)
  \item{}A backwards morphism \((1_X,\phi ): (X,A,L) \to  (X,B,L')\) is coCartesian if and only if \(L'(x,b) = L(x,\phi (b))\)\end{enumerate}\begin{proof}\par{}Note that it suffices to provide Cartesian and coCartesian lifts for backwards and forwards morphisms (Definition~\ref{efr-0027}), since such lifts compose. Hence it suffices to verify that the given descriptions are correct, since clearly they suffice to compute a (co)Cartesian lift over any such morphism.\par{}Note also that, since the forgetful functor is faithful, to verify a morphism \(\phi \) is (co)Cartesian, it suffices to prove that any factorization in the base lifts - uniqueness is automatic.\par{}Thus let \(\phi  = (\phi ,1_A): (X,A,L) \to  (Y,A,L')\) be so that \(L(x,a) = L'(\phi (x),a)\). Note that composition of a \(\Delta \)-homomorphism with a convex function is again convex, so this is indeed an object of \(\mathsf {Minmax}\)\par{}Now let \(\psi  = (\psi ^-,\psi ^+): (Z,B,K) \to  (Y,A,L')\) be some morphism so that we have the factorization \(\psi  = \phi \psi '\) in \(\mathsf {Set}^\Delta \times \mathsf {Set}^{\Delta ,\mathrm {op}}\). The goal is now to prove \(\psi ' : (Z,B,K) \to  (X,A,L)\) is a homomorphism. This is the inequality
  \[K(z,(\psi ')^-(a)) \geq  L((\psi ')^+(z),a) = L'((\phi \psi ')^+(z),a),\] which holds by assumption\par{}Let \(\phi \) be as above, but suppose \(L'(y,a) = \inf {\phi (x)=y}L(x,a)\). First, observe that by Lemma~\ref{efr-002A}, this function is in fact convex in \(y\) as desired.\par{}Let \(\psi : (X,A,L) \to  (Z,B,K)\) be given, and now suppose we have a factorization \(\psi  = \psi '\phi \) in the base. We must prove that \(L'(y,(\psi ')^-(b)) \geq  K((\psi ')^+(y),b),\)
    but since \(L'(y,(\psi ')^-(b)) = \inf _{\phi (x)=y}L(x,(\psi ')^-(b)),\) this amounts to the equation
    \(L(x,(\psi ')^-(b)) \geq  K((\psi ')^+\phi (x),b),\)
    which is again true by assumption.\par{}Now the case for backwards morphisms simply follows by duality.\end{proof}\end{proposition}\par{}What's "really" going on here is that \(\mathsf {Minmax}\) is a two-sided fibration, the result of taking the functor \(\mathsf {Set}^{\Delta ,\mathrm {op}} \times  \mathsf {Set}^{\Delta ,\mathrm {op}} \to  \mathsf {Cat}\) carrying a pair \(X,Y\) to the poset of minmax problems \(L: X \times  Y \to  \mathbb {R}\) (in the opposite order), with morphisms acting by precomposition, and applying the Grothendieck construction "contravariantly in the first variable and covariantly in the second variable". (And then observing that the precomposition action has left/right adjoints given by \(\inf \)/\(\sup \), to make this into a \emph{bi}fibration). But the theory of two-sided fibrations is quite complicated in general, and we will not go into it here - see \cite{Loregian2020} for a definition and basic properties.\par{}Note also that this functor is quite close to displaying \(\mathsf {Minmax}\) as \emph{topological} (see \cite{dubuc2006topological}). If we remove the restriction that minmax problems be convex/concave, we can construct the universal lifts required using a similar supremum formula. The problem is that the supremum of a general set of concave functions is not automatically concave (however, the supremum taken over a convex set, in a suitable sense, is).\begin{definition}[{\(\mathsf {Conv}\) and \(\mathsf {Conc}\)}]\label{efr-001R}
      \begin{enumerate}
      \item Let \(\mathsf {Conv}\) be the category where objects are pairs \((X,f: X \to  \mathbb {R})\) consisting of a convex space and a convex function, and where morphisms \(\phi : (X,f) \to  (Y,g)\) are affine maps so that \(g(\phi (x)) \leq  f(x)\).\par{}\item Let \(\mathsf {Conc}\) be the category where objects are pairs \((X,f: X \to  \mathbb {R})\) consisting of a convex space and a concave function, and where morphisms \(\phi : (X,f) \to  (Y,g)\) are affine maps so that \(g(\phi (x)) \geq  f(x)\).
      \end{enumerate}
    \end{definition}
    
    \begin{proposition}[{}]\label{efr-001S}\par{}
      \begin{enumerate}
      \item The assignment
    \[(X,Y,L) \mapsto  (X,L^+),\]
    \[(\phi ^+,\phi ^-): L \to  L' \mapsto  \phi ^+\]
    defines a functor \((-)^+: \mathsf {Minmax} \to  \mathsf {Conv}\)
    \item Similarly, \((-)^-\) defines a functor \(\mathsf {Minmax} \to  \mathsf {Conc}^\mathrm {op}\). (The reason for this idiosyncratic way of writing a contravariant functor will become apparent in a minute)
    \item The assignment \((X,f) \mapsto  (X,-f)\) defines a functor (an isomorphism of categories) \(\mathsf {Conc} \to  \mathsf {Conv}\), and vice versa. Then \(L^- = -(L^*)^+\)
    \item The assignment \((X,f) \mapsto  (X,*, f)\) defines a fully faithful functor \(\mathsf {Conv} \to  \mathsf {Minmax}\), whose essential image consists of those tuples \((X,Y,L)\) where \(Y\) is singleton.
    \item Analogously, \((Y,f) \mapsto  (*,Y,f)\) defines a fully faithful functor \(\mathsf {Conc}^\mathrm {op} \to  \mathsf {Minmax}\)
    \item We will abuse notation and identify \(\mathsf {Conv}\) and \(\mathsf {Conc}\) with their images under these inclusions - thus, for example, \(L^+\) will be regarded as an object of \(\mathsf {Minmax}\).
    \item \((-)^+\) is right adjoint to the inclusion of \(\mathsf {Conv}\), and \((-)^-\) (viewed as a functor \(\mathsf {Minmax} \to  \mathsf {Conc}^\mathrm {op}\)) is left adjoint to the inclusion of \(\mathsf {Conc}^\mathrm {op}\)
    \item Using these identifications, we have \((-)^- = (((-)^*)^+)^*\)
      \end{enumerate}
  \end{proposition}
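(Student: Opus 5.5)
This is a list of essentially routine verifications, and the plan is to do them in order, using the duality functor $(-)^*$ of Definition~\ref{efr-001J} to halve the work wherever possible.

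\textbf{Items 1 and 2.} Convexity of $L^+ = \sup_y L(-,y)$ follows because a pointwise supremum of convex functions is convex. (Here $L^+$ may take the value $+\infty$; we read it as valued in $(-\infty,\infty]$ using the conventions fixed earlier.) For functoriality, let $(\phi^+,\phi^-): L \to L'$ be a morphism. Then for every $y'$,
\[L'^+(\phi^+(x)) = \sup_{y'} L'(\phi^+(x),y') \leq \sup_{y'} L(x,\phi^-(y')) \leq \sup_y L(x,y) = L^+(x).\]
So $\phi^+$ is a morphism $(X,L^+) \to (X',L'^+)$ in $\mathsf{Conv}$, provided we take morphisms of minmax problems to be affine; this is implicit, since they live over $\mathsf{Set}^\Delta \times \mathsf{Set}^{\Delta,\mathrm{op}}$. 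Preservation of identities and composition is immediate.

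\textbf{Item 3.} Negation swaps convex and concave and reverses the inequalities, so $(X,f)\mapsto(X,-f)$ is an isomorphism $\mathsf{Conc}\to\mathsf{Conv}$. Moreover $(L^*)^+(x) = \sup_x(-L(x,y)) = -\inf_x L(x,y) = -L^-(y)$. Together with item 1 applied to $L^*$, this also gives item 2: a morphism $L\to L'$ dualizes to $L'^*\to L^*$, hence yields a $\mathsf{Conv}$ map $(Y',-L'^-)\to(Y,-L^-)$, i.e.\ a $\mathsf{Conc}$ map $(Y',L'^-)\to(Y,L^-)$, which is a map in $\mathsf{Conc}^{\mathrm{op}}$ in the stated direction. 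Item 8 then follows by unwinding the identifications: $((L^*)^+)^*$ is $(*,Y,-(-L^-)) = (*,Y,L^-)$.

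\textbf{Items 4 and 5.} A morphism $(X,*,f)\to(X',*,f')$ is a pair $(\phi^+, \mathrm{id}_*)$ with $f(x)\geq f'(\phi^+(x))$, which is exactly a $\mathsf{Conv}$ morphism; this gives full faithfulness. The essential image is the set of problems with singleton $Y$, since a concave function on a point imposes no condition and isomorphism with $*$ means $Y$ is a singleton. Item 5 follows by duality.

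\textbf{Item 7 (the main content).} For the adjunction, I will exhibit a natural bijection between morphisms $(Z,*,g)\to(X,Y,L)$ in $\mathsf{Minmax}$ and $\mathsf{Conv}$ maps $(Z,g)\to(X,L^+)$. A minmax morphism is an affine $\phi: Z\to X$ together with the unique map $Y\to *$, subject to $g(z)\geq L(\phi(z),y)$ for all $y$. This condition is equivalent to $g(z)\geq \sup_y L(\phi(z),y) = L^+(\phi(z))$, which is precisely the $\mathsf{Conv}$ condition. Naturality is clear because both sides are described by the same underlying map $\phi$. Equivalently, the counit $(\mathrm{id}_X, Y\to *): (X,*,L^+)\to (X,Y,L)$ is the coCartesian lift from Proposition~\ref{efr-0023}, item 3, and the universal property follows from that. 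The statement for $(-)^-$ follows by applying $(-)^*$, using item 8.

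The only genuine obstacle is bookkeeping around infinite values and the variance conventions for $\mathsf{Conc}^{\mathrm{op}}$. In particular, $L^+$ may be $+\infty$, so $\mathsf{Conv}$ must implicitly allow $(-\infty,\infty]$-valued functions, and I would state this explicitly. With that convention in place, no new idea is needed.
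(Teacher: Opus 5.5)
Your proposal is correct. The paper states this proposition without proof, so there is no argument of its own to compare against. The only hints at the intended route are two passages:
\begin{itemize}
\item the later remark that $\mathsf{Conc}^{\mathrm{op}}$ and $\mathsf{Conv}$ are the (co)localizations of $\mathsf{Minmax}$ at the forwards/backwards morphisms;
\item the line ``$\pi_{X,!}(L)=L^-$, $\pi_Y^*(L)=L^+$'' in the proof of Proposition~\ref{efr-0029}.
\end{itemize}
So the paper implicitly regards the unit $L\to L^-$ and the counit $L^+\to L$ as the coCartesian and Cartesian lifts along $(X\to *,1_Y)$ and $(1_X,Y\to *)$. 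Your direct hom-set bijection for item 7 verifies exactly this by hand, and it is more self-contained than going through Proposition~\ref{efr-0023}. You also correctly point out two omissions in the statement itself: morphisms must be affine for $(-)^+$ to land in $\mathsf{Conv}$, and the functions must be allowed to take infinite values. On the second point, $\mathsf{Conc}$ correspondingly needs $[-\infty,\infty)$-valued functions for $L^-$, and $\mathsf{Minmax}$ must be enlarged accordingly so the inclusions in items 4 and 5 make sense.

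There are a few small slips. In your ``equivalently'' remark, the counit $(X,*,L^+)\to(X,Y,L)$ is a \emph{Cartesian} morphism, not a coCartesian one: its universal property is among maps \emph{into} $(X,Y,L)$. Moreover, item 3 of Proposition~\ref{efr-0023} as printed reads $L(x,a)=\inf_{\phi(b)=a}L'(x,b)$. Applied to $Y\to *$, that would give $\inf_y L(x,y)$ rather than $L^+$. The correct formula there is $\sup_{\phi(b)=a}L'(x,b)$. You can see this by dualizing item 2, or by noting that with $\inf$ one gets $L(x,\phi(b))\le L'(x,b)$, so the map is generally not even a morphism. Your main argument does not depend on this cross-reference, so nothing breaks, but the remark should cite the Cartesian lift with the $\sup$ formula. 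Finally, in item 3 you wrote $(L^*)^+(x)$ where you mean $(L^*)^+(y)$.
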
\par{}Note that if \(\phi  = (\phi ^+,\phi ^-): L \to  L'\) is a morphism of \(\mathsf {Minmax}\), the two meanings of the notation \(\phi ^+\) agree, and the same is true of \(\phi ^-\).\par{}Note also that the reflexive subcategory \(\mathsf {Conc}^\mathrm {op} \subseteq  \mathsf {Minmax}\) is the local subcategory with respect to the forwards morphisms - a morphism is forward if and only if \(\phi ^-\) is an isomorhism (by definition), and the unit \(L \to  L^-\) is the terminal forwards morphism with domain \(L\). A dual statement holds for \(\mathsf {Conv} \subseteq  \mathsf {Minmax}\) (it is the \emph{colocalization} with respect to the class of backwards morphisms).
    \begin{definition}[{Monoidal structure on minmax problems}]\label{efr-001N}\par{}There is a monoidal structure on minmax problems, given by
    \((L \otimes  L') = (X \otimes  X', Y \otimes  Y', (x,x',y,y') \mapsto  L(x,y) + L(x',y')).\)
    The unit is \((*,*,0)\). \end{definition}\par{}A state (that is, a morphism $I \to (L,X,A)$) is a point \(x_0\) so that \(L(x_0,y) \leq  0\) for all \(y\).
    More interesting is asking for a state of \(L \otimes  L^*\).
    This is a pair \(x \in  X, y \in  Y\) so that the inequality
    \(L(x,y') \leq  L(x',y)\) holds for all \(y',x'\)\par{}Note that \(\sup _{y'} L(x,y') \geq  \inf _x L(x',y)\) for all \(x,y\), this is the minmax inequality (or "weak duality").\par{}Thus a choice of \(x,y\) giving a state gives equality in that inequation - it is a \emph{solution} of the minmax game.By duality, and since \((L \otimes  L^*)^* \cong  L \otimes  L^*\), states and costates are in bijection for such an object.\begin{proposition}[{}]\label{efr-002D}\par{}The forgetful functor \(\mathsf {Minmax} \to  \mathsf {Set}^\Delta  \times  \mathsf {Set}^{\Delta ,\mathrm {op}}\) is a monoidal fibration, in the sense of \cite{shulman-monfib-2009}, (see also \cite{moeller-vasilakopoulou-2018}). It is also a monoidal opfibration. $\mathsf{Conv}, \mathsf{Conc}^\mathrm{op}$ are monoidal subcategories, and $(-)^+,(-)^-$ are strong monoidal functors for the restricted monoidal product.\end{proposition}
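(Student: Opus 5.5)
The plan is to use Shulman's characterization. A fibration $p:\mathcal E\to\mathcal B$ between monoidal categories is a monoidal fibration when it is a strict monoidal functor and the tensor product of Cartesian morphisms is again Cartesian. The opfibration case is the same with coCartesian morphisms in place of Cartesian ones.

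\emph{Monoidal structure on the base and strictness.} First I will equip $\mathsf{Set}^\Delta\times\mathsf{Set}^{\Delta,\mathrm{op}}$ with the componentwise tensor product of convex spaces. I then check that the tensor of Definition~\ref{efr-001N} is well defined. The function $L(x,y)+L'(x',y')$ on $X\times X'\times Y\times Y'$ is convex in the pair $(x,x')$ and concave in $(y,y')$. I read it as a function on $X\otimes X'$ and $Y\otimes Y'$ through the canonical maps $X\otimes X'\to X\times X'$, which are affine. Precomposing a convex function with an affine map gives a convex function, so convexity is preserved.

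On morphisms, $\phi\otimes\phi'$ acts componentwise. The required inequality is obtained by adding the two inequalities $L(x,\phi^-y')\ge M(\phi^+x,y')$ and $L'(x',\phi'^-y'')\ge M'(\phi'^+x',y'')$. The associator and unitors are those of the base, and they lift because the sums agree on the nose. Hence the forgetful functor is strict monoidal, and functoriality is automatic by faithfulness.

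\emph{Cartesian morphisms are closed under tensor.} Every morphism factors as a forwards morphism followed by a backwards one, and the Cartesian lifts are built from these pieces. So it suffices to show that the tensor of two Cartesian morphisms is Cartesian, using the descriptions in Proposition~\ref{efr-0023}.

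\textbf{Forwards case.} For forwards morphisms, Cartesian means $L=L'\circ(\phi\times 1)$. This condition is visibly stable under addition, so the tensor of two such morphisms is Cartesian.

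\textbf{Backwards case.} For backwards morphisms, Cartesian means $L(x,a)=\inf_{\phi(b)=a}L'(x,b)$. Here I must show
\[
\inf_{(\phi\otimes\psi)(\beta)=\alpha}\bigl(L'+M'\bigr)=L+M .
\]
On pure tensors this splits as a sum of two infima, but general elements of $Y\otimes Y'$ are not pure tensors. This is the main obstacle.

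My first attempt is to reduce to the product case. I will show that these problems depend only on the image of $Y\otimes Y'\to Y\times Y'$, through which every Lagrangian of the form $L+M$ factors. I then compute the fibres of $\phi\times\psi$ componentwise, using that the infimum of a separable sum over a product fibre is the sum of the infima.

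If the tensor genuinely differs, I would fall back on working with $\times$ instead of $\otimes$. The problem-level Lagrangians only ever see the image, so I would check that the universal property for factorizations through the tensor still follows. This works because an arbitrary test morphism $K\to L\otimes M$ also factors pointwise through the product.

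\emph{Remaining parts.} The opfibration statement follows by applying the duality $(-)^*$, which is monoidal since $(L\otimes L')^*=L^*\otimes L'^*$.

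For the monoidal subcategories: $\mathsf{Conv}$ consists of problems with $Y=*$, and $*\otimes *=*$, so it is closed under $\otimes$. Dually, $\mathsf{Conc}^{\mathrm{op}}$ is closed under $\otimes$.

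Finally, $(L\otimes L')^+(x,x')=\sup_{y,y'}\bigl(L(x,y)+L'(x',y')\bigr)=L^+(x)+L'^+(x')$, since the supremum of a separable sum splits. This gives strong monoidality of $(-)^+$, with the identity comparison maps. By Proposition~\ref{efr-001S}(8), $(-)^-=(((-)^*)^+)^*$, so strong monoidality of $(-)^-$ follows by duality.
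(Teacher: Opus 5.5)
The paper states this proposition without proof, so I am judging your argument on its own terms. Most of it is fine:
\begin{itemize}
\item well-definedness of $\otimes$ via the affine maps $X\otimes X'\to X\times X'$;
\item strictness of the forgetful functor;
\item the forwards-Cartesian case;
\item the reduction of Cartesian arrows to forwards and backwards pieces;
\item closure of $\mathsf{Conv}$ and $\mathsf{Conc}^{\mathrm{op}}$;
\item strong monoidality of $(-)^+$, which needs $Y\otimes Y'\to Y\times Y'$ to be surjective (true via pure tensors).
\end{itemize}

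\textbf{The gap.} It is exactly the backwards case you flagged, and it cannot be closed. Take $\phi\colon B\to A$, $\psi\colon B'\to A'$ and $\alpha\in A\otimes A'$, and write $\pi\colon B\otimes B'\to B\times B'$. The extremum defining the Cartesian lift of $L'\otimes M'$ runs over $\pi\bigl((\phi\otimes\psi)^{-1}(\alpha)\bigr)$. This set lies inside $(\phi\times\psi)^{-1}(\pi\alpha)$, but for non-pure $\alpha$ it can be strictly smaller. So computing the fibres of $\phi\times\psi$ componentwise computes the wrong thing.

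\textbf{A counterexample.} Let $B=[0,1]^2$, $A=[0,2]$, $\phi(u,v)=u+v$ and $L'(*,(u,v))=u$. The Cartesian lift along $(1_*,\phi)$ is $L(*,a)=\sup_{u+v=a}u=\min(a,1)$. Tensor $(1_*,\phi)$ with the identity of $(*,[0,1],0)$, and take $\alpha=\tfrac12(0\otimes 0)+\tfrac12(2\otimes 1)$.
\begin{itemize}
\item Since $\Delta^1$ is free, $B\otimes\Delta^1\cong B+B$ and $\phi\otimes 1\cong\phi+\phi$.
\item Since $\phi^{-1}(0)$ and $\phi^{-1}(2)$ are single points, every preimage $\beta$ of $\alpha$ has $\pi_1\beta=(\tfrac12,\tfrac12)$.
\item Hence the Cartesian lift $\widetilde{L}$ of $L'\otimes 0$ has $\widetilde{L}(\alpha)=\tfrac12$.
\item But $(L\otimes 0)(\alpha)=L(1)=1$.
\end{itemize}
So $\otimes$ does not preserve Cartesian arrows. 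You only get $\widetilde{L}\le L\otimes 0$, a lax rather than pseudo compatibility. Applying $(-)^*$, coCartesian arrows over forwards maps are not preserved either. So the fibration and opfibration claims are false for the tensor of Definition~\ref{efr-001N}.

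(Incidentally, the lift over a backwards map is a $\sup$, not the $\inf$ printed in Proposition~\ref{efr-0023}; compare $\pi_A^*L=L^+$. With your $\inf$ the same example gives $0\neq\tfrac12$.)

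\textbf{Your fallbacks.} Neither rescues the argument.
\begin{itemize}
\item The claim that a test morphism into $L\otimes M$ factors through the product is refuted by taking the test object to be $\widetilde{L}$ itself. It maps to $L'\otimes 0$ over $(1,\phi\otimes 1)$, but it does not factor through $L\otimes 0$, since that would need $\widetilde{L}\ge L\otimes 0$.
\item Replacing $\otimes$ by $\times$ in both variables, i.e.\ using $(X\times X',Y\times Y',L+L')$, does make every step of your proof work. A supremum of a separable sum over a product fibre is the sum of the suprema. The claims about $\mathsf{Conv}$, $\mathsf{Conc}^{\mathrm{op}}$ and $(-)^{\pm}$ then also hold. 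But this proves the result for a different monoidal structure from the one the proposition is about.
\end{itemize}
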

    
    \section{Strong duality}
    \begin{proposition}[{Weak duality}]\label{efr-001W}\par{}Let \(L\) be a minmax problem.
    Then
    \[\inf _x \sup _y L(x,y) = (L^+)^- \geq  (L^-)^+ = \sup _x \inf _y L(x,y),\]
    where we abuse notation by identifying a minmax problem \(*,*, r\) with the number \(r(*,*)\)
    \end{proposition}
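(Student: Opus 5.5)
The plan is to first identify the two composites with the stated numbers, and then obtain the inequality from one morphism of minmax problems that exists for formal reasons.

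\emph{Identifying the composites.} By Proposition~\ref{efr-001S}, \(L^+ = (X, \sup_y L(-,y))\) is regarded as the minmax problem \((X,*,L^+)\). Applying \((-)^-\) to it takes the infimum over the primal variable, which gives \((*,*,\inf_x \sup_y L(x,y))\). Dually, \(L^- = (*,Y,\inf_x L(x,-))\), and applying \((-)^+\) takes the supremum over \(Y\), which gives \((*,*,\sup_y\inf_x L(x,y))\). Both are objects over \((*,*)\), so they are just real numbers, and the two equalities in the statement hold by definition. (The statement writes \(\sup_x\inf_y\), but from Definition~\ref{efr-001D} the intended quantity is \(\sup_y \inf_x L(x,y)\).)

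\emph{Reducing the inequality to a morphism.} A morphism \((*,*,r)\to(*,*,s)\) in \(\mathsf{Minmax}\) exists exactly when \(r \geq s\), by the defining inequality \(L(x,\phi^-(y'))\geq L'(\phi^+(x),y')\). So it suffices to build a morphism \((L^+)^- \to (L^-)^+\) by categorical means.

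\emph{Building the morphism.} Proposition~\ref{efr-001S}(7) supplies the counit \(\varepsilon: L^+ \to L\), whose components are \((1_X, !)\); it is valid because \(\sup_{y'}L(x,y') \geq L(x,y)\). It also supplies the unit \(\eta: L \to L^-\), with components \((!,1_Y)\). Their composite \(\eta\varepsilon: L^+ \to L^-\) then has domain in \(\mathsf{Conv}\) and codomain in \(\mathsf{Conc}^{\mathrm{op}}\). Applying the left adjoint \((-)^-\) to the domain side and \((-)^+\) to the codomain side, via the universal properties, factors this composite as
\[L^+ \to (L^+)^- \to (L^-)^+ \to L^-.\]
Concretely, a morphism \(L^+\to L^-\) is a pair of maps \(X\to *\) and \(Y \to *\) with \(\sup_{y'} L(x,y') \geq \inf_{x'} L(x',y)\) for all \(x,y\). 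Taking the infimum over \(x\) and the supremum over \(y\) turns this into a morphism \((*,*,\inf_x\sup_y L)\to(*,*,\sup_y\inf_x L)\), which is exactly the required inequality.

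The main obstacle is justifying this last step, namely that \((-)^-\) of an object of \(\mathsf{Conv}\) lands over \((*,*)\), and that the induced map between the two scalar objects is well defined. I would verify this directly from the adjunctions: a map from \((L^+)^-\) into an object of the form \((*,*,s)\) corresponds to a map from \(L^+\), and symmetrically for \((-)^+\) into \((L^-)^+\). If this becomes awkward, I would fall back on the elementary chain
\[\sup_{y'}L(x,y')\geq L(x,y)\geq \inf_{x'}L(x',y),\]
which holds for all \(x,y\), and then take \(\inf_x\) on the left and \(\sup_y\) on the right.
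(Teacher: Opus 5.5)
Your proposal is correct and is essentially the argument the paper only gestures at (``follows from considering the adjunction properties of $(-)^+,(-)^-$''). Composing the counit $L^+\to L$ with the unit $L\to L^-$ and factoring through $(L^+)^-\to(L^-)^+$ by the two universal properties is exactly that argument, your elementary fallback chain is the ``intuitively clear'' version, and you are right that the right-hand side should read $\sup_y\inf_x L(x,y)$ rather than the statement's $\sup_x\inf_y$.
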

    This is intuitively clear, but also follows from considering the adjunction properties of $(-)^+,(-)^-$.
        
    \begin{definition}[{Strong duality}]\label{efr-002F}\par{}Let \(L = (X,A,L)\) be a minmax problem. By Proposition~\ref{efr-001W}, there is a morphism
  \((L^+)^- \to  (L^-)^+\). We say \(L\) \emph{satisfies strong duality} if it is an isomorphism. (Note that this is really just an inequality of real numbers, which must be an equality).\end{definition}
    Observe that for a minmax problem arising as the Lagrangian of a convex optimization problem in standard form, this is precisely the classical notion of strong duality, see \cite[Section 5.2.3]{boyd-vandenberghe-convexopt-2004}

  \begin{proposition}[{}]\label{efr-001X}\par{}Let \(L\) be a minmax problem.
    Suppose there exists \(\phi : I \to  L \otimes  L^*\).
    Then strong duality holds, i.e \((L^+)^- \cong  (L^-)^+\). \(((\phi )^+)^-\) gives a morphism
    \[I = (I^+)^- \to  ((L \otimes  L^*)^+)^- \cong  (L^+)^- \otimes  ((L^*)^+)^- \cong  (L^+)^- \otimes  ((L^-)^+)^*\]
  \end{proposition}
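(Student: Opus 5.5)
The plan is to apply the composite functor $((-)^+)^-$ to the given state $\phi: I \to L \otimes L^*$, identify the codomain with a pair of scalar problems, and read off an inequality of real numbers. Combined with weak duality (Proposition~\ref{efr-001W}), that inequality forces strong duality.

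First, I would compute $(L\otimes L^*)^+$ directly rather than appeal to Proposition~\ref{efr-002D}. That proposition only asserts strong monoidality for the restricted product on $\mathsf{Conv}$, while $L\otimes L^*$ does not lie in $\mathsf{Conv}$. Write $L\otimes L^* = (X\otimes Y,\, Y\otimes X,\, K)$ with $K = L(x,y') - L(x',y)$ on pure tensors, where $(x,y)$ is the primal pair and $(y',x')$ the dual pair. Then
\[(L\otimes L^*)^+(x\otimes y) = \sup_{y',x'} \bigl(L(x,y') - L(x',y)\bigr) = L^+(x) - L^-(y).\]
The sup over the dual tensor $Y\otimes X$ reduces to pure tensors $y'\otimes x'$. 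The reason is that $K$ is concave in the dual variable and every element of the tensor is a convex combination of pure tensors, so $K$ at such a combination is at least the corresponding convex combination of its values on pure tensors. Applying $(-)^-$, i.e.\ taking the inf over $X\otimes Y$ (again reducible to pure tensors by convexity), gives
\[((L\otimes L^*)^+)^- = \inf_x L^+(x) - \sup_y L^-(y) = (L^+)^- + \bigl(-(L^-)^+\bigr).\]
This matches the displayed isomorphisms in the statement. The identification of the second factor $((L^*)^+)^-$ with $((L^-)^+)^*$ follows from item 8 of Proposition~\ref{efr-001S}. There, $L^- = (((L^*)^+)^*)$, so $(L^*)^+ = (L^-)^*$, and then $((L^-)^*)^- = ((L^-)^+)^*$, using that $(-)^*$ is self-inverse. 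On a scalar problem $(*,*,r)$, the duality $(-)^*$ simply negates $r$.

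Second, I would use functoriality. Since $(I^+)^- = I = (*,*,0)$, applying $((-)^+)^-$ to $\phi$ yields a morphism $I \to (*,*,\, a - b)$, where $a = \inf_x\sup_y L$ and $b = \sup_y\inf_x L$. By the description of states after Definition~\ref{efr-001N}, a morphism $(*,*,0)\to(*,*,r)$ exists exactly when $r \le 0$. Hence $a \le b$. Weak duality (Proposition~\ref{efr-001W}) gives $a\ge b$, so $a=b$, and the comparison morphism $(L^+)^-\to(L^-)^+$ is an isomorphism.

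The main obstacle is the first step: justifying that the sup and inf over the tensor products reduce to pure tensors, and handling possibly infinite values. If $a$ or $b$ is infinite, the difference $a-b$ is problematic. I would sidestep this by working concretely with the state $(x_0,y_0)$ instead. The state condition $L(x_0,y')\le L(x',y_0)$ for all $x',y'$ gives $a \le L^+(x_0) \le L^-(y_0) \le b$. Everything here is finite, since $L(x_0,y_0)$ is real and bounds both ends. This reproves the categorical computation at the level of elements and removes all $\pm\infty$ ambiguity.
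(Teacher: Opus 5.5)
Your overall argument is the paper's. You apply $((-)^+)^-$ to $\phi$, identify the codomain with $(L^+)^-\otimes((L^-)^+)^*$ via $(L^*)^+=(L^-)^*$, read off $(L^+)^-\le(L^-)^+$, and close with weak duality. The paper does not compute the codomain. It cites strong monoidality of $(-)^\pm$ from Proposition~\ref{efr-002D}, which it intends for functors out of all of $\mathsf{Minmax}$, since it applies it to $L\otimes L^*$. Your elementwise version at the end is correct and complete: $L(x_0,y_0)\le L^+(x_0)\le L^-(y_0)\le L(x_0,y_0)$, together with $a\le L^+(x_0)$, $L^-(y_0)\le b$ and weak duality. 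It also settles the finiteness issues that the paper ignores.

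One step in your direct computation is wrong as written. You justify reducing the $\sup$ over $Y\otimes X$ to pure tensors by concavity. But concavity gives $K(p,\sum_i t_i q_i)\ge\sum_i t_i K(p,q_i)$, which is a \emph{lower} bound. Reducing a supremum to generators needs an upper bound. For a concave function, the supremum over a convex hull can strictly exceed the maximum on the generators, e.g.\ $-t^2$ on $[-1,1]$. The same inversion occurs for your $\inf$ over $X\otimes Y$ ``by convexity''.

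The reduction is nonetheless true, for a different reason. The only sensible reading of the paper's formula for $L\otimes L'$ off pure tensors is to precompose with the canonical affine maps $X\otimes X'\to X\times X'$ and $Y\otimes Y'\to Y\times Y'$. Then $K$ factors through the Cartesian products, and pure tensors already surject onto those. The same observation is what reduces a state of $L\otimes L^*$, a priori a point of $X\otimes Y$, to the pair $(x_0,y_0)$ you use in the elementwise argument.
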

    Here we use the isomorphisms \((L^+)^* = (L^*)^-\) and vice versa, as well as strong monoidality of \((-)^-\) and \((-)^+\).
    The existence of that morphism means that
    \((L^+)^- \leq  (L^-)^+,\)
    which is the other direction of the morphism we wanted.

    If a minmax problem is a zero-sum game, a point \(I \to  L \otimes  L^*\) is a choice of Nash equilibrium for this game.\begin{proposition}[{}]\label{efr-0029}\par{}Let \((X,Y,L)\) be a minmax problem.
  Then there is a canonical commutative diagram
  
    \begin{center}
        \begin {tikzcd}
            (X,*) \ar [r, "\pi _X"] \ar [d, "\pi _A"] & (*,*)\ar [d, "\pi _A"]\\
            (X,A) \ar [r, "\pi _X"] & (*,A)
            \end {tikzcd}
    \end{center}

  \noindent in \(\mathsf {Set}^\Delta  \times  \mathsf {Set}^{\Delta ,\mathrm {op}},\) which is a pullback.
  \(L\) obeys strong duality if and only if this square has the local Beck-Chevalley condition for \(L\), in the sense that the canonical map \(\pi _{X,!}\pi _A^*L \to  \pi _A^*\pi _{X,!}L\) is an isomorphism. (See \cite{pavlovic-descent-beck-chevalley-1991} for more on the Beck-Chevalley condition)
 
  \begin{proof}Recall that \(\pi _{X,!}(L) = L^-, \pi _Y^*(L) = L^+\). Hence the claim is just that \(\inf _x \sup _a L(x,a) = (L^+)^- = (L^-)^+ = \sup _a\inf _x L(x,a),\) which is precisely strong duality.\end{proof}\end{proposition}
  
  We now prove a minimax theorem for a class of our minimax problems. Note that it relies crucially on \emph{compactness}, and so doesn't apply to the Lagrangians of standard-form convex optimization problems.

  \begin{theorem}[{Minimax theorem}]\label{efr-001E}\par{}Let \((L,X,A) \in  \mathsf {Minmax}\). If \(X,A\) are both convex, compact subspaces of finite-dimensional vector spaces, and \(L\) is continuous, then strong duality holds for \(L\), and moreover an equilibrium \(I \to  L \otimes  L^*\) exists.
  \end{theorem}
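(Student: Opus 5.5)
Via Proposition~\ref{efr-001X}, it suffices to produce the equilibrium: a state $I \to L \otimes L^*$, that is, a pair $(x^*,a^*) \in X \times A$ with $L(x^*,a) \leq L(x,a^*)$ for all $x \in X$, $a \in A$. Strong duality then follows formally from that proposition. The equilibrium can be obtained either by a Kakutani-type fixed point argument or by a Sion-style direct argument. Since the paper's standing data are only convexity/concavity and continuity, not strict convexity, I would use a fixed point theorem for set-valued maps rather than best-response functions.

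\textbf{Step 1: best-response correspondences.} Define $\beta_X(a) = \operatorname{argmin}_{x \in X} L(x,a)$ and $\beta_A(x) = \operatorname{argmax}_{a \in A} L(x,a)$. By compactness of $X$ and $A$ and continuity of $L$, these sets are nonempty and compact. Convexity of $L(-,a)$ makes $\beta_X(a)$ convex, since a sublevel set at the minimum value of a convex function is convex; dually, concavity of $L(x,-)$ makes $\beta_A(x)$ convex.

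\textbf{Step 2: closed graph.} The correspondence $\Phi(x,a) = \beta_X(a) \times \beta_A(x)$ on the compact convex set $X \times A \subseteq V \times W$ has closed graph. Indeed, if $(x_n,a_n) \to (x,a)$ and $(x'_n,a'_n) \to (x',a')$ with $L(x'_n,a_n) \leq L(z,a_n)$ for all $z$, then passing to the limit using continuity of $L$ gives $L(x',a) \leq L(z,a)$; the other factor is handled the same way. This is the step that really needs joint continuity of $L$. Kakutani's fixed point theorem then yields $(x^*,a^*) \in \Phi(x^*,a^*)$. Unwinding, $L(x^*,a^*) \leq L(x,a^*)$ and $L(x^*,a) \leq L(x^*,a^*)$ for all $x,a$, which is exactly the required inequality $L(x^*,a) \leq L(x,a^*)$.

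\textbf{Step 3: conclude.} The pair $(x^*,a^*)$ is a morphism $I \to L \otimes L^*$, as described after Definition~\ref{efr-001N}, and Proposition~\ref{efr-001X} gives $(L^+)^- \cong (L^-)^+$.

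The main obstacle is justifying the use of Kakutani within the paper's framework. The convex spaces here are subspaces of finite-dimensional vector spaces, so the convex structure is the standard one, and the topology is the Euclidean one, which agrees with Definition~\ref{efr-002U}. If one wished to avoid Kakutani, the fallback would be Brouwer's theorem applied to a regularized best response: replace $L$ by $L(x,a) + \epsilon\|x\|^2 - \epsilon\|a\|^2$, which makes the argmin and argmax single-valued and continuous by a compactness argument. Brouwer then gives an $\epsilon$-equilibrium, and a limit point as $\epsilon \to 0$, extracted using compactness, is an exact equilibrium by continuity.
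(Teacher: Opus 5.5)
Your proof is correct, but it follows the Kakutani route that the paper mentions explicitly and then deliberately sets aside. You produce the saddle point $(x^*,a^*)$ first, as a fixed point of the best-response correspondence, and then obtain strong duality from Proposition~\ref{efr-001X}. The paper works in the opposite order. It first proves strong duality through solvable pairs:
\begin{itemize}
\item a base case $([0,1],[0,1])$, by a planar argument in which a left-to-right path with $L>0$ must cross a top-to-bottom path with $L<0$ (Proposition~\ref{efr-002V});
\item an induction up to simplices $\Delta^n$, by factoring the Beck--Chevalley square of Proposition~\ref{efr-0029} through an affine surjection and checking the left-hand square fibrewise (Proposition~\ref{efr-002W});
\item a finite-intersection-property argument to pass from simplices to arbitrary $A$ (Lemma~\ref{efr-002X});
\item the self-duality $(-)^*$ to swap the two variables (Corollary~\ref{efr-002Y}).
\end{itemize}
Only then does the paper use compactness of $X$ and $A$ to choose $x^*$ attaining $\inf_x\sup_a L$ and $a^*$ attaining $\sup_a\inf_x L$, which together form the equilibrium.

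Your argument is much shorter and entirely standard. However, all of its topology is delegated to Kakutani's theorem, which relies on the ambient linear structure. It also needs both $X$ and $A$ compact, because it reaches strong duality only by way of an equilibrium, and an equilibrium need not exist when one side is noncompact. The paper's route takes only an elementary two-dimensional topological fact as input. It presents strong duality as a local Beck--Chevalley condition that can be propagated along fibrations, which is the point of the paper. It also yields a stronger intermediate result: $(X,A)$ is solvable as soon as one of the two spaces is compact, for general topological convex spaces. That is a Sion-type statement your fixed-point method does not reach directly.
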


  The original minimax theorem, due to Von Neumann (\cite{vonneumann1928}) is the special case where $X,A$ are both standard simplices $\Delta^n, \Delta^m,$ and $f$ is affine (not merely convex). This has been generalized many times, including some which have the above as a special case (see for example \cite{siongeneral1958}). The novelty here is not the theorem, but the categorical approach to the proof.
  
  Our theorem can be derived from the Kakutani fixpoint theorem in a very similar way to the usual proof of Nash's theorem about general, non-zerosum games - although note that it is not a special case, since \(X\) and \(A\) may not be simplices, and the payoff function here is merely convex, not necessarily \emph{affine} as it is for a game-theoretic game.\par{}However, we will give a different proof, which uses the structure of \(\mathsf {Minmax}\) in a more direct way. Essentially, we will use compactness to reduce to the case of simplexes, then use an inductive argument to reduce to the case where \(X = A = \Delta ^1 = [0,1],\) which can be shown by a direct topological argument. The inductive step is a fiber sequence argument, where we use the characterization of strong duality in terms of the Beck-Chevalley property, Proposition~\ref{efr-0029}.\begin{definition}[{Solvable pair}]\label{efr-002T}\par{}Let \(X,A\) be \hyperref[efr-002U]{topological convex spaces}. We say the pair \((X,A)\) is a \emph{solvable pair} if, for any continuous minmax problem \(L: X \times  A \to  \mathbb {R}\), strong duality holds.\end{definition}\begin{proposition}[{}]\label{efr-002V}\par{}The pair \(([0,1],[0,1])\) (in other words, \((\Delta ^1,\Delta ^1)\)) is solvable.\begin{proof}\par{}Let \(L: [0,1] \times  [0,1] \to  \mathbb {R}\) be a continuous minmax problem. Suppose strong duality does not hold. Then by adding a constant to \(L\), we can arrange that
  
  \[\sup _\theta  \inf _s L(s,\theta ) < 0 < \inf _s \sup _\theta  L(s,\theta ).\]
  
  Consider the set \(P = \{(s,\theta ) \mid  L(s,\theta ) > 0\}\). Since we must have \(\sup _\theta  L(s,\theta ) > 0\) for each \(s\), the first projection \(P \to  [0,1]\) must be surjective. Since each fiber is convex, and hence connected, and the projection \([0,1] \times  [0,1] \to  [0,1]\) is open, \(P\) is connected. As an open connected subset of a convex space, it is path connected. Hence there exists some path \(\gamma (t) \in  P\) where \(\gamma (0) = (0,\theta _0)\) and \(\gamma (1) = (1,\theta _1)\). In other words (picturing the square with the first coordinate horizontal), there exists a path from the left to the right side of the cube so that \(L(\gamma (t)) > 0\) everywhere on the path. Dually, there also exists a path from top to bottom so that \(L\) is strictly negative everywhere on that path. But they must intersect somewhere, and this is a contradiction. Hence \(L\) must have a state or a costate, finishing the proof.\end{proof}\end{proposition}
  
  \begin{proposition}[{}]\label{efr-002W}\par{}Let \(f: E \to  B\) be an affine surjection between compact Hausdorff topological convex spaces, and suppose:
  \begin{itemize}\item{}\((B,A)\) is solvable.
    \item{}For every \(b \in  B\), \((E_b,A)\) is solvable, where \(E_b \subseteq  E\) is the fiber.\end{itemize}
  Then also \((E,A)\) is solvable.
  \begin{proof}
    
    Observe first that if $L: E \times A \to \mathbb{R}$ is continuous, so is $f_!L = (b,a) \mapsto \inf_{e \in E_B}L(e,a)$. For $r \in \mathbb{R}$, consider $f_!L^{-1}((-\infty,r))$. This is simply the image $(f \times 1_A)(L^{-1}((-\infty,r)))$, which is open since a continuous surjection between compact Hausdorff spaces is always open.
  
    On the other hand, consider $f_!L^{-1}((r,\infty))$. Suppose $(b,a)$ is in this set - then there exists some $\epsilon$ so that $L(e,a) > r + \epsilon$ for $e \in E_b$.
    By continuity of $L$ there exist, for each $e \in E_b$, neighborhoods $V_e \subseteq E$ and $U_e \subset A$ so that $e \in V_e, a \in U_e$, and $L(V_e \times U_e) \geq r + \epsilon/2$. By compactness there exists a finite set of $V_e$s which cover $E_b$. Let $V$ be this union and $U$ the intersection of the corresponding $U_e$. Then $L(V \times U) \geq r + \epsilon/2$. Then $f(V) \times U$ is an open neighborhood of $(b,a)$ contained in $f_!L^{-1}((r,\infty))$, hence this set is open, hence $f_!L$ is continuous.
    
    \par{}Now recall that \((E,A)\) being solvable means the following square has the Beck-Chevalley condition for continuous \(L\):
    
    \begin{center}
        \begin{tikzcd}
            (E,*) \ar[r] \ar[d] & (*,*) \ar[d]\\
            (E,A) \ar[r] & (*,A)
        \end{tikzcd}
    \end{center} \noindent
    Now we can factor this as follows:
    \begin{center}
        \begin{tikzcd}
            (E,*) \ar[r] \ar[d] & (B,*) \ar[d] \ar[r] & (*,*)\ar[d]\\
            (E,A) \ar[r] & (B,A) \ar[r] & (*,A)
        \end{tikzcd}
    \end{center}\noindent
    By the preceding argument, and the assumption that $(B,A)$ is solvable, the right-hand square here has the Beck-Chevalley condition. So it suffices to show the left-hand square does. For a given \(L\), this means showing that these two functions on \(B\) are the same
    \[b \mapsto  \inf _{e \mapsto  b} \sup _a L(e,a)\]
    \[b \mapsto  \sup _a \inf _{e \mapsto  b} L(e,a)\]
    But this equation, for some given \(b\), is exactly strong duality in the restriction of \(L\) to \((E_b,A)\), which must hold because this is a solvable pair by assumption.
  \end{proof}

\end{proposition}

\begin{corollary}
      If \((X,[0,1])\) is solvable, so is \((X,\Delta ^n)\) for each \(n\)
    \end{corollary}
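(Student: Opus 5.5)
We reduce to Proposition~\ref{efr-002W} by swapping the roles of the two variables via the duality $(-)^*$, and then induct on $n$.

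\textbf{Symmetry of solvability.} First we check that $(X,A)$ is solvable if and only if $(A,X)$ is. Given a continuous minmax problem $L: A\times X \to \mathbb{R}$, the dual problem $L^*(x,a) = -L(a,x)$ (Definition~\ref{efr-001J}) is a continuous minmax problem on $X \times A$. Moreover
\[
\inf_a\sup_x L = -\sup_a\inf_x L^* \quad\text{and}\quad \sup_x\inf_a L = -\inf_x\sup_a L^*,
\]
so strong duality for $L$ is equivalent to strong duality for $L^*$. Since $(-)^*$ is self-inverse, this gives the symmetry. Hence the hypothesis says $([0,1],X)$ is solvable, and it suffices to show $(\Delta^n,X)$ is solvable for every $n$.

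\textbf{Base cases.} For $n=0$, $\Delta^0=*$, and both sides of strong duality equal $\inf_x L(*,x)$, so the pair is trivially solvable. The case $n=1$ is the hypothesis. We also record that solvability is invariant under affine homeomorphisms of either factor, since precomposition with such a map preserves continuity and the convex/concave conditions and bijects the problems. This lets us identify any nondegenerate affine image of a simplex with the standard one.

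\textbf{Inductive step.} Let $n\ge 2$ and assume $(\Delta^{n-1},X)$ is solvable. Consider the map $f:\Delta^n\to[0,1]$, $(s_0,\dots,s_n)\mapsto s_n$. It is affine and surjective, and both spaces are compact Hausdorff topological convex spaces. Its fibers are as follows:
\begin{itemize}
\item For $t<1$, the fiber is $\{s : s_n=t\}$. It is affinely homeomorphic to $\Delta^{n-1}$ via $s\mapsto (s_0,\dots,s_{n-1})/(1-t)$.
\item For $t=1$, the fiber is a single point.
\end{itemize}
By the induction hypothesis and the $n=0$ case, every pair $(E_t,X)$ is solvable, and $([0,1],X)$ is solvable by hypothesis. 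Proposition~\ref{efr-002W}, applied with $E=\Delta^n$, $B=[0,1]$ and $A=X$, then gives that $(\Delta^n,X)$ is solvable. By symmetry, $(X,\Delta^n)$ is solvable.

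\textbf{Expected obstacle.} The only delicate point is whether Proposition~\ref{efr-002W} needs any hypothesis on $A=X$ beyond being a topological convex space. Inspecting its proof, the continuity of $f_!L$ uses only compactness of $E$ and its fibers and openness of $f$; the latter comes from $f$ being a continuous surjection between compact Hausdorff spaces. So no compactness of $X$ should be required.

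If the continuity argument does turn out to use more about $A$ (for instance, openness of $f\times 1_A$), we would verify directly that $f\times 1_X$ is open. This holds because $f$ is open and products of open maps are open.
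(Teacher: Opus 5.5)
Your proof is correct and follows the paper's argument: you induct on $n$ by applying Proposition~\ref{efr-002W} to a coordinate projection $\Delta^n \to [0,1]$ whose fibers are copies of $\Delta^{n-1}$ (plus a single point), with $n=1$ being the hypothesis. Making the duality swap explicit is right, since Proposition~\ref{efr-002W} fibers the first (minimized) variable while the corollary concerns the second, a step the paper leaves implicit. One small caution: $f$ is open because it is a coordinate projection of a simplex, which is easy to check directly, not merely because it is a continuous surjection between compact Hausdorff spaces, which is false in general.
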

    The case $n=0$ is trivial, and the case $n=1$ is simply the hypothesis since $\Delta^1 \cong [0,1]$.
    The fibers of the map $\Delta^n \to [0,1]$ which picks out the first coordinate are all isomorphic to $\Delta^{n-1}$, except the fiber over $1$ which is simply the point, so using the proposition, we are done by induction.

\begin{lemma}[{}]\label{efr-002X}\par{}Let \(X\) be a compact topological convex space. Suppose \((X,\Delta ^n)\) is solvable for all \(n\). Then \((X,A)\) is solvable for all topological convex spaces \(A\).
    
    \begin{proof}
     Suppose for contradiction \(L: X \times  A \to  \mathbb {R}\) does not have strong duality, and assume without loss of generality \(\sup _a \inf _x L(x,a) < 0 < \inf _x \sup _a L(x,a)\). For each \(a \in  A\) let \(X_a\) consist of those \(x \in  X\)so that \(L(x,a) \leq  0\). Given some finite family \(a_0, \dots  a_n,\) consider the induced map \(a: \Delta ^n \to  A\) and apply solvability to the problem \((a_!L,X,\Delta ^n)\) - this implies in particular that \(\inf _x \sup _i L(x,a_i) \leq  0\). This means the family \(X_a\) has the finite intersection property, so by compactness it has nonempty intersection. But then an element \(x^*\) of the intersection must satisfy \(\sup _a L(x^*,a) \leq  0,\) which is a contradiction. \end{proof}
\end{lemma}
\begin{corollary}[{}]\label{efr-002Y}\par{}If \(X\) is compact, \((X,A)\) is solvable for any \(A\).
    \begin{proof}
        By Proposition~\ref{efr-002W} and its corollary, applied to Proposition~\ref{efr-002V}, we have
        \(([0,1],\Delta ^n)\) solvable for all \(n\). Since \([0,1]\) is compact, this means \(([0,1],A)\) solvable for all \(A\) by Lemma~\ref{efr-002X}. Now by duality we have \((X,[0,1])\) solvable for all \(X,\) which means \((X,\Delta ^n)\) is solvable, and by using Lemma~\ref{efr-002X} again, we are done.
    \end{proof}
\end{corollary}
\begin{proof}[Proof of Theorem~\ref{efr-001E}]
\par{}By Corollary~\ref{efr-002Y}, the pair \((X,A)\) is solvable, and strong duality holds. Since \(X,A\) are both compact, there must exist \(x^*,a^*\) attaining the infimum \(\inf _x \sup _a L(x,a)\) and the supremum \(\sup _a \inf _x L(x,a)\). These form an equilibrium.
\end{proof}
It is interesting to note the use of compactness here. Recall that topological compactness is closely connected with the property, also called compactness, of \(\operatorname {\mathrm {Hom}}(X,-)\) preserving filtered colimits (although this property, instantiated in \(\mathsf {Top}\), is not actually the same thing as topological compactness). Our use of compactness here, to derive from the existence of a state in the "finitary" subproblems \((L,X,\Delta ^n)\) the existence of a state in the entire problem, does not have this form (nor is it even the case that \(A\) is the colimit of its subsimplices), but it's possible that the proof could be rewritten to make this step more categorical.\par{}The idea of proceeding by induction on \(n\) was inspired by \cite{weinstein-elementary-minimax-2022}, although our proof is rather different - they are only looking at \emph{affine} games, and hence their induction step is completely different (and they have no need for the complicated \(n=1\) base case that we do), and since we are not merely interested in games on simplices, we need an additional compactness argument.\par{}We can use the minimax theorem to derive other statements of interest about convex optimization\begin{theorem}[{The separating hyperplane theorem (compact case)}]\label{efr-002Z}\par{}Let \(X,Y \subset  \mathbb {R}^k\) be disjoint, compact, convex subspaces. Then there exists \(v \in  \mathbb {R}^k\) and \(\alpha  \in  \mathbb {R}\) so that \(\langle  v,x \rangle  + \alpha  < 0 < \langle  v,y \rangle  + \alpha \) whenever \(x \in  X, y \in  Y\).\begin{proof}\par{}Consider the minmax problem\[(L,X \times  Y, A = \overline {B(0,1)} \subseteq  \mathbb {R}^k), L(x,y,v) = \langle  v,y-x \rangle .\]\par{}Since the closed unit ball is compact, by the minimax theorem there exists an equilibrium \(x^*,y^*,v^*\), which then satisfies
    \[\langle  v,y^*-x^* \rangle  \leq  \langle  v^*,y^*-x^* \rangle  \leq  \langle  v^*,y-x \rangle \]\par{}By disjointness, \(y^*-x^*\) must be nonzero, so with a suitable choice of \(v\) we can clearly make the left-hand item strictly positive. Hence \(\langle  v^*,y^*-x^* \rangle  =: \delta  > 0\). Now there must exist some \(\alpha  \in  \mathbb {R}\) so that \(\langle  v^*,y^* \rangle  + \alpha  = -\langle  v^*,x^* \rangle  - \alpha  = \delta  /2 > 0\).\par{}By the equilibrium property, we see that \(y^*\) must minimize \(\langle  v^*,y \rangle \) on \(Y\), and analogously \(x^*\) must maximize \(\langle  v^*,x \rangle \) on \(X\). Hence for all \(x,y\), we have
    \[\langle  v^*,x \rangle  + \alpha  \leq  -\delta  /2 < 0 < \delta  /2 \leq  \langle  v^*,y \rangle  + \alpha ,\] which concludes the proof.\end{proof}\end{theorem}
    
    \begin{theorem}[{The separating hyperplane theorem (general case)}]\label{efr-0030}\par{}Let \(X,Y \subseteq  \mathbb {R}^k\) be disjoint convex subsets. Then there exists \(v,\alpha \) so that \(\langle  v,x \rangle  + \alpha  \leq  0 \leq  \langle  v,y \rangle  + \alpha \) for all \(x \in  X, y \in  Y\).\begin{proof}\par{}Let \(K_i, L_i, i=1, \dots \) be two sequences of sets with the following properties:
    \begin{itemize}\item{}For each \(i\), \(K_i,L_i\) are disjoint.
      \item{}For each \(i\), \(K_i \subseteq  K_{i+1}\)
      \item{}Each of the \(K_i,L_i\) are compact and convex
      \item{}\(\cup _i K_i = X, \cup _i L_i = Y\)\end{itemize}
    These can be constructed for example by taking the intersection of \(X\) and \(Y\) with the boxes \([-i,i]^k\) to obtain compact, convex, disjoint subsets which exhaust \(X\) and \(Y\).\par{}Now apply Theorem~\ref{efr-002Z} to obtain a sequence of \(v_i \in  \overline {B(0,1)}\) so that \(\langle  v_i,- \rangle \) is negative on \(K_i\) and positive on \(L_i\). By compactness of the unit ball, this sequence has a point of density \(v^*\). Now for every pair \(x \in  X, y \in  Y\), we can find some \(i\) so that \(\langle  v_i,x \rangle \) is within an arbitrary \(\epsilon \) of \(\langle  v^*,x \rangle \) and the same is true for \(y\), and so that \(x \in  K_i, y \in  L_i\). But then \(\langle  v^*,y-x \rangle \) is within \(2\epsilon \) of \(\langle  v_i,y-x \rangle ,\) which is positive, so that \(\langle  v^*,y-x \rangle  \geq  0\).\par{}Now for each \(i\), \(\langle  v^*,- \rangle \) has a maximizer \(x_i^*\) on \(K_i\) and a minimizer \(y_i^*\) on \(L_i\). Hence, by an argument analogous to the proof of Theorem~\ref{efr-002Z}, there is a nonempty closed interval \([a_i,b_i]\) so that, if \(\alpha  \in  [a_i,b_i],\) we have \(\langle  v^*,- \rangle  + \alpha \) \(\leq  0\) on \(K_i\) and \(\geq  0\) on \(L_i\). But since the sets \(K_i, L_i\) are increasing this sequence of intervals must be decreasing, and hence the intersection must be nonempty - and then any \(\alpha \) in this intersection will make \(\langle  v^*,- \rangle  + \alpha \) nonpositive on \(X\), nonnegative on \(Y\), as desired.\end{proof}\end{theorem}
    
    (This is just the standard separating hyperplane theorem, see \cite[Section 2.5.1]{boyd-vandenberghe-convexopt-2004} for a textbook treatment).

    \section{The Legendre Transform}\begin{definition}[{Convex conjugate}]\label{efr-001P}\par{}Let \(V\) be a (real) vector space, and \(f: V \to  \mathbb {R}\) be a function (not necessarily linear).
    Then the \emph{convex conjugate} \(f^*: V^* \to  \mathbb {R}\) is defined by
    \[f^*(\alpha ) = \sup _x \alpha (x) - f(x)\]\end{definition}\par{}The convex conjugate is also called the \emph{Legendre transform} or the \emph{Fenchel-Legendre transform}. It is intimately related to convex duality. We will prove the following fundamental property of the convex conjugate using the categorical language of minmax problems, and along the way we will see the role that convex duality plays. Note that our invocation of the term "strong duality" here is somewhat more complicated than strictly necessary - normally one would merely invoke the separating hyperplane theorem directly.\begin{proposition}[{}]\label{efr-001Q}\par{}Let \(f: V \to  \mathbb {R}\) be convex, so that \((V,*,f)\) is a minmax problem.
    Then we can form the modified minmax problem \(L = (V,V^*,(x,\alpha ) \mapsto  f(x) - \alpha (x))\) - note that, up to a sign change in the domain, this amounts to adding the constraint \(x = 0\). Then \((L^*)^+ = -L^- = f^*\). Note that the two uses of the asterisk in this equation conflict.
    We have both the reversed optimization problem \(L^*\) given by flipping the variables,
    and the \hyperref[efr-001P]{convex conjugate} function \(f^*\). This shouldn't cause undue confusion, however. \end{proposition}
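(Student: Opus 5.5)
The identity \((L^*)^+ = -L^- = f^*\) follows by unwinding definitions, so the plan is a direct computation that tracks signs and the two meanings of the asterisk. It has two steps: first show \((L^*)^+ = -L^-\) in general, then compute \(L^-\) for this particular \(L\).

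For the first equality we use Definition~\ref{efr-001J}. The dual problem is \(L^* = (V^*, V, (\alpha,x) \mapsto -L(x,\alpha))\). By Definition~\ref{efr-001D}, its primal function is
\[(L^*)^+(\alpha) = \sup_x \bigl(-L(x,\alpha)\bigr) = -\inf_x L(x,\alpha) = -L^-(\alpha).\]
This is exactly item 3 of Proposition~\ref{efr-001S}, namely \(L^- = -(L^*)^+\), read at the level of functions on \(V^*\). Under the identification of \(\mathsf{Conc}\) with \(\mathsf{Conv}\) given by negation, it also holds as objects.

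For the second equality we substitute \(L(x,\alpha) = f(x) - \alpha(x)\):
\[-L^-(\alpha) = -\inf_x \bigl(f(x) - \alpha(x)\bigr) = \sup_x \bigl(\alpha(x) - f(x)\bigr) = f^*(\alpha),\]
which is Definition~\ref{efr-001P}.

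Before the equalities make sense, we should check that \(L\) is a minmax problem. It is convex in \(x\) because \(f\) is convex and \(-\alpha\) is linear, and it is affine, hence concave, in \(\alpha\). The remark that this amounts to adding the constraint \(x=0\) is justified by a short computation: \(\sup_\alpha \bigl(f(x) - \alpha(x)\bigr)\) equals \(f(0)\) when \(x = 0\) and \(+\infty\) otherwise. So \(L^+\) is the constrained problem, up to the sign convention on the multiplier.

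There is no real obstacle in the argument. The only subtleties are bookkeeping ones. Values may be \(+\infty\) or \(-\infty\), so we adopt the extended-real conventions the paper set up earlier; the sup and inf manipulations above are valid in \(\overline{\mathbb{R}}\). We must also keep the sign of the duality functor \((-)^*\) on \(\mathsf{Minmax}\) separate from the Legendre transform \(f^*\).
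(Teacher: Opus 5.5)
Your argument is correct and is the same definitional unwinding the paper relies on; the paper states this proposition without a separate proof. The first equality \((L^*)^+ = -L^-\) is item 3 of Proposition~\ref{efr-001S}, and \(-\inf_x\bigl(f(x)-\alpha(x)\bigr) = \sup_x\bigl(\alpha(x)-f(x)\bigr) = f^*(\alpha)\) is Definition~\ref{efr-001P}. Your side checks are also right, including that \(L^+\) equals \(f(0)\) at \(x=0\) and \(+\infty\) elsewhere (the paper's analogous remark in Proposition~\ref{efr-001T} has these two cases swapped).
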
\begin{proposition}[{}]\label{efr-001T}\par{}Given a minmax problem \(L = (X,Y,L)\) where \(X\) is a finite-dimensional real vector space, let
    \(L|_{0} = (X, Y \oplus  X^*, L \oplus  - \langle  -,- \rangle )\)
    Note that \((L|_0)^+(x) = \infty \) when \(x = 0\) and \(L^+(0)\) otherwise.
    Thus this amounts to adding a constraint that \(x = 0\).
    Analogously, define \(L|^0 = (L^*|_0)^* = (X \oplus  Y^*, Y, L \oplus  \langle  -, - \rangle )\). Then the Legendre transform \(f^* = ((f|_0)^*)^+\) (viewing both \(f\) and \(f^*\) as minmax problems using the inclusion \(\mathsf {Conv} \to  \mathsf {Minmax}\))\end{proposition}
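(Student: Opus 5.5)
The plan is to unfold the definitions directly. Every claim in the statement is a pointwise identity between explicitly defined functions, so no earlier structural result is needed beyond Definitions~\ref{efr-001D} and \ref{efr-001J} and the inclusion $\mathsf{Conv} \to \mathsf{Minmax}$ of Proposition~\ref{efr-001S}.

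\textbf{Step 1: $L|_0$ is a minmax problem.} For fixed $(y,\alpha)$, the function $x \mapsto L(x,y) - \alpha(x)$ is a convex function plus an affine one, so it is convex. For fixed $x$, the function $(y,\alpha) \mapsto L(x,y) - \alpha(x)$ is concave in $y$ and affine in $\alpha$. Convex combinations in $Y \oplus X^*$ are taken componentwise, so it is jointly concave.

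\textbf{Step 2: compute $(L|_0)^+$.} Because the supremum is over a product and the two summands depend on separate variables, it splits:
\[(L|_0)^+(x) = \sup_{y,\alpha}\bigl(L(x,y) - \alpha(x)\bigr) = L^+(x) + \sup_{\alpha \in X^*}\bigl(-\alpha(x)\bigr).\]
\begin{itemize}
\item If $x \neq 0$, there is $\alpha$ with $\alpha(x) = -1$, and scaling $\alpha$ makes the second supremum $+\infty$.
\item If $x = 0$, the second supremum is $0$, so $(L|_0)^+(0) = L^+(0)$.
\end{itemize}
This yields the intended reading of the statement: the value is $\infty$ when $x \neq 0$ and $L^+(0)$ when $x = 0$. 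One caveat is that $L^+(x)$ may itself be $+\infty$. The sum is still well defined, since $-\infty$ never appears (the second supremum is always $\geq 0$ and $L^+ > -\infty$), so the convention for extended-real convex functions applies.

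\textbf{Step 3: the Legendre transform.} View $f$ as $(V,*,f)$. Then
\[f|_0 = \bigl(V,\; * \oplus V^*,\; (x,\alpha) \mapsto f(x) - \alpha(x)\bigr),\]
and we identify $* \oplus V^*$ with $V^*$. By Definition~\ref{efr-001J},
\[(f|_0)^* = \bigl(V^*,\; V,\; (\alpha,x) \mapsto \alpha(x) - f(x)\bigr).\]
Hence, by Definition~\ref{efr-001D},
\[((f|_0)^*)^+(\alpha) = \sup_{x \in V}\bigl(\alpha(x) - f(x)\bigr) = f^*(\alpha),\]
which is Definition~\ref{efr-001P}. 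As an element of $\mathsf{Conv}$, this is the object $(V^*, f^*)$. This also gives $((f|_0)^*)^+ = -(f|_0)^-$, using item 3 of Proposition~\ref{efr-001S}, which matches Proposition~\ref{efr-001Q}.

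\textbf{Main obstacle.} The mathematics here is trivial. The only real point of care is bookkeeping: extended real values in Step 2, and the identification $* \oplus V^* \cong V^*$ together with the order of variables after dualizing in Step 3. I would handle these by stating the conventions explicitly rather than arguing them.
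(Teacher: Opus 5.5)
Your proposal is correct and is the direct unfolding of Definitions~\ref{efr-001D}, \ref{efr-001J} and \ref{efr-001P} that the paper intends; the paper gives no separate proof of this statement. You are also right to read ``$\infty$ when $x=0$'' as a typo for $x\neq 0$. You leave two small points implicit: the one-line check that $(L^*|_0)^*=(X\oplus Y^*,Y,(x,\beta,y)\mapsto L(x,y)+\beta(y))$, which needs $Y$ to be a finite-dimensional vector space for $L^*|_0$ to be defined, and the fact that your bound $L^+>-\infty$ tacitly assumes $Y\neq\emptyset$, as the paper also does.
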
\begin{proposition}[{}]\label{efr-001U}\par{}Let \(f: X \to  \mathbb {R}\) be a continuous convex function defined on a vector space.
    Then there is strong duality in the minmax problem \(f|_0\)\begin{proof}\par{}Observe that \(\{x,t \mid  f(x) \leq  t\} \subseteq  X \oplus  \mathbb {R}\) is a closed convex set.
        Hence there is a hyperplane through \((0,f(0))\) so that the entire set is in one half-space.
        This means a nontrivial affine equation \(A(x,t) \geq  b\) which is satisfied whenever \(t \geq  f(x)\), and where \(A(0,f(0)) = b\).\par{}Clearly \(A(x,t) = \alpha _0(x) - at\) for some \(\alpha _0 \in  X^*, a \in  \mathbb {R}\).
        If \(a = 0\) we have \(\alpha _0(x) \leq  b\) for \emph{all} \(x\), which impossible.
        So by normalizing let's set \(a = 1\).
        This means \(\alpha _0 (x) + f(x) \geq  b = f(0)\).\par{}Recall that the minmax problem \(f|_0\) is given by \((X,X^*,L(x,\alpha ) \mapsto  f(x) - \alpha (x))\).
        Strong duality means \(\inf _x\sup _\alpha  L(x,\alpha ) = \sup _\alpha  \inf _x L(x,\alpha )\).
        We always have the inequality \(\geq \), so it suffices to identify an \(\alpha ^*\) so that
        \(\inf _x \sup _\alpha  L(x,\alpha ) \leq  \inf _x L(x,\alpha ^*)\)\par{}Clearly, for our \(L\), we have \(\inf _x\sup _\alpha  L(x,\alpha ) = f(0)\), since the supremum is \(\infty \) unless \(x = 0\).
        On the other hand, taking \(\alpha ^* = -\alpha _0\), we have \(f(0) \leq  f(x) - \alpha ^*(x)\) for all \(x\) by construction, finishing the proof.\end{proof}\end{proposition}\begin{lemma}[{}]\label{efr-003C}\par{}Given a commutative square:
  \begin{center}
    \begin {tikzcd}
    (X_1,A_1) \ar [r] \ar [d] & (Y_1,B_1) \ar [d]\\
    (X_2,A_2) \ar [r] & (Y_2,B_2)
    \end {tikzcd}
\end{center}
  \noindent in \(\mathsf {Set}^\Delta  \times  \mathsf {Set}^{\Delta ,\mathrm {op}},\) with the Beck-Chevalley property for the fibration from \(\mathsf {Minmax},\)
  let \((Z,C)\) be some other pair of convex spaces. Then the square 
  
  \begin{center}
    \begin {tikzcd}
    (X_1 \times  Z,A_1 \times  C) \ar [r] \ar [d] & (Y_1 \times  Z,B_1 \times  C) \ar [d]\\
    (X_2 \times  Z,A_2 \times  C) \ar [r] & (Y_2 \times  Z,B_2 \times  C)
    \end {tikzcd}
\end{center}\noindent
  also has the Beck-Chevalley condition\end{lemma}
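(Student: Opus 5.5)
The plan is to reduce the Beck--Chevalley condition for the enlarged square to the Beck--Chevalley condition for the original square, applied one slice $(z,c)\in Z\times C$ at a time. Write the original square as $g\colon (X_1,A_1)\to (X_2,A_2)$, $f\colon (X_1,A_1)\to (Y_1,B_1)$, $h\colon (X_2,A_2)\to (Y_2,B_2)$, $k\colon (Y_1,B_1)\to (Y_2,B_2)$. The new square consists of $f\times 1$, $g\times 1$, $h\times 1$, $k\times 1$, where $\times 1$ means $\times 1_Z$ on the primal component and $\times 1_C$ on the dual component. The condition to check is that the canonical comparison $(f\times 1)_!(g\times 1)^*L\to (k\times 1)^*(h\times 1)_!L$ is an isomorphism for every minmax problem $L$ on $(X_2\times Z, A_2\times C)$. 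Since the fibres of $\mathsf{Minmax}$ over $\mathsf{Set}^\Delta\times\mathsf{Set}^{\Delta,\mathrm{op}}$ are posets of functions, this comparison is an isomorphism precisely when the two functions on $Y_1\times Z\times B_1\times C$ are pointwise equal.

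\textbf{Step 1 (fibrewise formulas).} Factor each base morphism into its forwards and backwards parts, as in Proposition~\ref{efr-0023}. Then reindexing and pushforward along a general morphism $(\phi^+,\phi^-)$ are given by explicit formulas: precomposition with $\phi^+$ and $\phi^-$, together with an $\inf$ over fibres of $\phi^+$ (for pushforward) or the dual extremum over fibres of $\phi^-$ (for pullback). I will check that for a product morphism $(\phi^+\times 1_Z,\phi^-\times 1_C)$ the fibre over $(y,z)$ is $(\phi^+)^{-1}(y)\times\{z\}$, and similarly on the dual side. Hence the formulas never mix the $Z$ or $C$ coordinates. Concretely, for fixed $(z,c)$ let $L_{z,c}(x,a)=L(x,z,a,c)$. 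This is again a minmax problem on $(X_2,A_2)$, because restricting a function that is convex in $(x,z)$ to a fixed $z$ keeps it convex in $x$, and similarly for concavity. The key identity is
\[\bigl((\phi\times 1)_!L\bigr)_{z,c}=\phi_!(L_{z,c}),\qquad \bigl((\phi\times 1)^*L\bigr)_{z,c}=\phi^*(L_{z,c}),\]
and likewise after composing several such operations.

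\textbf{Step 2 (compare canonical maps).} The canonical Beck--Chevalley map is built from units and counits of the adjunctions $\phi_!\dashv\phi^*$. Because the fibres are posets, it suffices that both composites, evaluated at $(z,c)$, equal the corresponding composites for $L_{z,c}$; no naturality bookkeeping is needed. By Step~1 we get
\[\bigl((f\times 1)_!(g\times 1)^*L\bigr)_{z,c}=f_!g^*L_{z,c},\qquad \bigl((k\times 1)^*(h\times 1)_!L\bigr)_{z,c}=k^*h_!L_{z,c}.\]
The hypothesis on the original square gives $f_!g^*L_{z,c}=k^*h_!L_{z,c}$ for every $(z,c)$. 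The two functions therefore agree everywhere, and the comparison map is an isomorphism.

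The main obstacle I expect is Step~1. It requires stating the formula for pullback and pushforward along a \emph{general} (mixed forwards/backwards) morphism carefully, and checking that it commutes with slicing. This is where any subtlety about the convention in Proposition~\ref{efr-0023} (an $\inf$ versus a $\sup$ over fibres of $\phi^-$) would appear. I would handle it by proving the slicing identity separately for forwards and for backwards morphisms, using the four explicit descriptions in Proposition~\ref{efr-0023}. The general case then follows because (co)Cartesian lifts compose. If the intended statement is the local version (Beck--Chevalley only for a specific class of $L$, such as continuous ones), the same argument applies, provided that class is closed under taking slices $L\mapsto L_{z,c}$.
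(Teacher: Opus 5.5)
The paper states Lemma~\ref{efr-003C} without proof, so there is no argument of the paper's to compare against. Your slicewise argument is correct and constitutes a complete proof.

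Both facts it rests on check out.
\begin{enumerate}
\item Pushforward along $(\phi^+,\phi^-)$ is $(y,b)\mapsto\inf_{\phi^+(x)=y}L(x,\phi^-(b))$, and reindexing is $(x,a)\mapsto\sup_{\phi^-(b)=a}L'(\phi^+(x),b)$. For $(\phi^+\times 1_Z,\phi^-\times 1_C)$ the relevant fibres are $(\phi^+)^{-1}(y)\times\{z\}$ and $(\phi^-)^{-1}(a)\times\{c\}$. Hence both operations commute with slicing at $(z,c)$.
\item Each fibre of $\mathsf{Minmax}$ is a genuine poset: a vertical map $L\to L'$ exists iff $L\geq L'$ pointwise. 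So the Beck--Chevalley comparison is invertible iff the two functions agree pointwise.
\end{enumerate}

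On the convention you flagged: the minimal $L$ with $L(x,\phi(b))\geq L'(x,b)$ is given by a $\sup$. So item~3 of Proposition~\ref{efr-0023} should read $\sup$ rather than $\inf$; this is also what makes $\pi_A^*L=L^+$ in Proposition~\ref{efr-0029}. As you note, your argument is unaffected, since either extremum commutes with slicing.

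Your pointwise formulation is in fact the form Proposition~\ref{efr-001V} needs: Beck--Chevalley at $L$ follows from Beck--Chevalley for the original square at every slice $L_{z,c}$. Only a local condition is available there.
\begin{itemize}
\item In that application $(Z,C)=((X^*)^*,*)$, and the slice of $(f|_0)|^0$ at $x'$ is $(x,\alpha)\mapsto f(x)-\alpha(x-x')$.
\item This is not literally of the form $g|_0$, so the class of problems $\{g|_0\}$ is not closed under slicing as in your closing caveat.
\item It does become $g|_0$ with $g=f(-+x')$ after the substitution $x\mapsto x+x'$, which does not affect $\inf_x$.
\item Since $g$ is again continuous and convex, Proposition~\ref{efr-001U} gives strong duality for each slice, and your argument then yields the required Beck--Chevalley condition.
\end{itemize}
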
\begin{proposition}[{}]\label{efr-001V}\par{}Let \(f\) be a convex function.
    Then \(f = (f^*)^*\) (where \(f^*\) denotes the Legendre transform), under the identification \((X^*)^* = X\) of a finite-dimensional vector space with its double dual.\begin{proof}\par{}Recall that \(f^* = ((f|_0)^*)^+,\) as a minmax problem.
        Then the claim is that \[(((((f|_0)^*)^+)|_0)^*)^+ = f.\]
        Using first the rewrite \(((-)^*)^+ = ((-)^-)^*,\) and the notation \(((-)^*|_0)^* = -|^0,\)
        we can rewrite that as
        \[((f|_0)^-|^0)^+\]\par{}Now observe that, restricted to the subcategory \(\mathsf {Minmax}_l\) given by minmax problems \((X,Y,L)\) where \(X,Y\) are real vector spaces, and those homomorphisms given by linear (rather than merely affine) maps, \((-)|_0\) and \(-|^0\) form endofunctors, and \((-)|_0 \dashv  (-)|^0\).\par{}Since \((-)^-\) is left adjoint to the inclusion, we have \((-)|_0^- \dashv  -|^0\).
        Hence there is a canonical map, the unit of the adjunction, \(L \to  (L|_0)^-|^0\) for any \(L\).
        If \(L = (X,*,f)\) is an element of \(\mathsf {Conv}\), then by the universal property, this map factors over \(((f|_0)^-|^0)^+\). This gives us the inequality \(f \geq  (f^*)^*\).\par{}(Note that this inequality actually holds even if \(f\) is not convex, and indeed we haven't really used convexity yet).\par{}Observe that, using the natural identification \((X^*)^* = X\), we have \[(f|_0)|^0 = (X \oplus  X, X^*, (x,x';\alpha ) \mapsto  f(x) - \alpha (x) + \alpha (x')).\] Clearly \(\inf _x \sup _\alpha  f(x) - \alpha (x) + \alpha (x') = f(x'),\) since the supremum is infinite unless \(x = x'\). But observe that \((f^*)^*(x') = \sup _\alpha  \inf _x f(x) - \alpha (x) + \alpha (x')\)\par{}Our claim now is that we may exchange these extremizers by strong duality. This amounts to the claim that the local Beck-Chevalley property holds for this square at \((f|_0)|^0\):
        
  \begin{center}
    \begin {tikzcd}
            (X \oplus  (X^*)^*, *) \ar [d] \ar [r] & ((X^*)^*, *) \ar [d]\\
            (X \oplus  (X^*)^*, X^*) \ar [r] & ((X^*)^*, X^*)
            \end {tikzcd}
\end{center}\noindent
But by Proposition~\ref{efr-001U}, strong duality holds in every square of the form
        
  \begin{center}
    \begin {tikzcd}
    (X, *) \ar [d] \ar [r] & (*, *) \ar [d]\\
    (X, X^*) \ar [r] & (*, X^*)
    \end {tikzcd}
\end{center}\noindent
and by Lemma~\ref{efr-003C}, this is establishes that the previous square has the Beck-Chevalley condition as well, which finishes the proof.\end{proof}\end{proposition}

\bibliographystyle{eptcs}
\bibliography{convex}
\end{document}